\documentclass[a4paper,12pt,intlimits,oneside]{amsart}
\usepackage{amsmath}
\usepackage{amssymb}
\usepackage{amsfonts}
\usepackage{amsthm,amscd}
\usepackage{hyperref}

\numberwithin{equation}{section}
\newtheorem{theorem}{Theorem}[section]

\newtheorem{corollary}[theorem]{Corollary}

\newtheorem{proposition}[theorem]{Proposition}
\newtheorem{remark}[theorem]{Remark}

\newcommand{\comment}[1]{}

\numberwithin{equation}{section}

\def\lsim{\raisebox{-1ex}{$~\stackrel{\textstyle <}{\sim}~$}}

\theoremstyle{definition}

\begin{document}
\title []{Fractional integral and fractional maximal operators on generalized Fofana spaces}

\author[P. Nagacy]{Pokou Nagacy}
\address{Laboratoire des Sciences et Technologies de l'Environnement, UFR Environnement, Universit\'e Jean Lorougnon
GUEDE, BP 150 Daloa, C\^ote d'Ivoire}
\email{{pokounagacy@yahoo.com}}

\author[B. A. Kpata]{B\'erenger Akon Kpata}
\address{Laboratoire de Math\'ematiques et Informatique, UFR Sciences Fondamentales et
Appliquées,
 Université Nangui Abrogoua,
 02 BP 801 Abidjan 02,  C\^ote d'Ivoire}
\email{{kpata\_akon@yahoo.fr}}

\author[N. Diarra]{Nouffou Diarra}
\address{Laboratoire de Math\'ematiques et Applications, UFR Math\'ematiques et Informatique,
 Universit\'e F\'elix
 Houphou\"et-Boigny,
 22 BP 582 Abidjan 22,  C\^ote d'Ivoire}
\email{{nouffoud@yahoo.fr}}

 \renewcommand{\thefootnote}{}
\footnote{\emph{Corresponding author}: Nouffou Diarra}

\footnote{\emph{Email address:} nouffoud@yahoo.fr}

\footnote{2020 \emph{Mathematics Subject Classification}: 42B20, 42B25, 42B35.}

\footnote{\emph{Key words and phrases}: generalized Fofana spaces, fractional integral operators, fractional maximal operators, generalized   fractional integral operators, Olsen-type inequalities}

\renewcommand{\thefootnote}{\arabic{footnote}}
\setcounter{footnote}{0}

\begin{abstract}
Generalized Fofana spaces were recently introduced as generalizations of Fofana spaces and Nakai's generalized Morrey spaces. In this paper, we establish the boundedness properties of the following operators in these spaces: fractional integral operators, fractional maximal operators and generalized fractional integral operators. As a consequence, we obtain generalized Olsen-type inequalities involving the Riesz potential and generalized fractional integral operators.
\end{abstract}

\maketitle

\section{Introduction}
Let $1\leq q \leq p\leq \infty$. We denote by  $\mathcal{G}_{q,p}$ the set of all functions  $\phi :(0,\,\infty) \rightarrow(0,\infty)$ such that  $t\mapsto \phi(t) \,t^{\frac{d}{p}}$ is almost decreasing ($0 <r\leq s<\infty \Rightarrow \exists\, C>0:\phi(s) \,s^{\frac{d}{p}}\leq C \phi(r) \,r^{\frac{d}{p}}$) and $t\mapsto \phi(t)\,t^{\frac{d}{q}}$ is almost increasing ($0<r\leq s<\infty \Rightarrow \exists\, C>0:\phi(r) \,r^{\frac{d}{q}}\leq C \phi(s) \,s^{\frac{d}{q}}$).\\
For $\phi \in \mathcal{G}_{q,p}$, the generalized Fofana space $ (L^{q}, L^{p})^{\phi} (\mathbb{R}^{d})$ is defined as the set of all  functions $f\in L^{0}(\mathbb{R}^{d})$  such that  
  \begin{equation*}
  \left\|f\right\|_{(L^{q},L^{p})^{\phi}(\mathbb{R}^{d})}=\sup_{r>0} \frac{1}{\phi(r)} r^{d(-\frac{1}{q}-\frac{1}{p})}
  \: _{r}\left\|f\right\|_{q,p}<\infty,
  \end{equation*}
where $ L^{0}(\mathbb{R}^{d}) $ stands for the complex vector space of equivalent classes (modulo equality Lebesgue almost everywhere) of Lebesgue measurable complex-valued functions on $ \mathbb{R}^{d}$ and
\begin{equation*}
 _{r}\left\|f\right\|_{q,p} = \left\|\left[\int_{\mathbb R^{d}}\vert f\chi_{B(y,r)}\vert^{q}(x)dx\right]^{\frac{1}{q}}\right\|_p , \label{M}
 \end{equation*} 
with the $ L^{p}(\mathbb{R}^{d})$-norm taken with respect to the variable $ y$. Here,  
 $B(y,\, r)$ denotes the open ball centered at $y \in \mathbb{R}^{d}$ with radius $r>0$. Generalized Fofana spaces $\left(L^{q},L^{p}\right)^{\phi}(\mathbb{R}^d)$ were introduced in \cite{NKD}, in which it is proved that $\left(\left(L^{q},L^{p}\right)^{\phi}(\mathbb{R}^d),\, \left\|\cdot\right\|_{(L^{q},L^{p})^{\phi}(\mathbb{R}^{d})}  \right)$ is a Banach space. Note that, if $q \leq \alpha \leq p$  and $\phi(r)=r^{-\frac{d}{\alpha}}$  then $(L^{q}, L^{p})^{\phi} (\mathbb{R}^{d})=(L^{q}, L^{p})^{\alpha}(\mathbb{R}^{d})$, where  $(L^{q}, L^{p})^{\alpha}(\mathbb{R}^{d})$ is the classical Fofana space. Moreover $(L^{q}, L^{p})^{\phi} (\mathbb{R}^{d})$ coincides with the generalized Morrey space $\mathcal{M}_{q}^{\phi}$, defined by Nakai in \cite{EN}, when $p=\infty$.

 In this paper, we are interested in the boundedness properties of the following operators in generalized Fofana spaces: Riesz potentials, fractional maximal operators and generalized fractional integral operators.\\  
Recall that for a given function $\rho : \: (0,\, \infty) \:  \longrightarrow \: (0,\,\infty)$, the generalized fractional integral operator $ T_{\rho} $ is defined by
\begin{eqnarray*} \label{rel 9}
 T_{\rho} f(x)=\displaystyle\int_{\mathbb{R}^{d}}\frac{\rho(\left|x-y\right|)}{\left|x-y\right|^{d}}f(y)dy,\quad   x \in \mathbb R^ {d},
\end{eqnarray*}
for any suitable function $ f $ on $ \mathbb{R}^{d} $.\\
Let $0 < \gamma < d$. For $\rho = t^{\gamma}$, $T_{\rho}$ is the classical fractional integral operator $I_{\gamma}$, also known as the Riesz potential of order $\gamma$. It is closely related to the fractional maximal operator $M_{\gamma}$, defined by
\begin{eqnarray*} \label{rel 6}
 M_{\gamma} f(x)=\sup_{r>0}\vert B(x,r)\vert^{\frac{\gamma}{d}-1}\displaystyle\int_{B(x,r)}\left|f(y)\right|dy,\;\;\;\;\; x\in \mathbb{R}^{d},
\end{eqnarray*}
for any locally integrable function $f$ on $\mathbb{R}^{d}$. It follows from the definitions of $I_{\gamma}$ and $M_{\gamma}$ that
\begin{equation} \label{rel 11}
M_{\gamma} f(x) \leq C_{d}I_{\gamma} (\vert f \vert)(x),\;\;\;\; x\in \mathbb{R}^{d},
\end{equation}
where $C_{d}$ is the Lebesgue measure of the unit ball in $\mathbb{R}^{d}$.
Note that in the limiting case $\gamma=0$, the fractional maximal operator reduces to the Hardy-Littlewood maximal operator $ M$.\\
It is proved in \cite{JF} that $ M $ is bounded on classical Fofana spaces. It is also bounded on generalized Morrey spaces (see \cite{EN}) and on generalized Fofana spaces (see \cite{NKD}). The Riesz potential $I_{\gamma}$ is an important tool in harmonic analysis and partial differential equations. It acts as the inverse of the fractional Laplacian $(-\Delta)^{-\gamma/2}$. Consequently it is useful for finding explicit solutions to wave and Schr\"odinger equations. It is also found applications in the theory of Sobolev embeddings (see, for example \cite{Ma}). The boundedness property of the classical fractional integral operator and its corresponding fractional maximal operator on classical Fofana spaces have been investigated in \cite{JF, fof, Fof}. In \cite{Fof-F-K}, norm inequalities for the Riesz potential were examined in particular subspaces of Morrey spaces. For the study of the boundedness of fractional integral operators on generalized Morrey spaces the reader is referred to \cite{GE, EN, ST}. Recall that the boundedness of the operator $T_{\rho}$ on the generalized Morrey spaces $\mathcal{M}_{q}^{\phi}$ was first studied by Nakai \cite{EN1}. Several papers were devoted to norm inequalities for $T_{\rho}$ in the framework of generalized Morrey spaces, see for example \cite{GE, S-S-T, Su} and the references therein.

In studying a Schr\"odinger equation with perturbed potentials $ W  $ (functions on $ \mathbb{R}^{d} $), Olsen \cite{Ol} established
the boundedness of the multiplication operator $ f \mapsto W \cdot I_{\gamma}f$ on Morrey spaces. This estimate, later called Olsen's inequality, was improved by Sawano et al. in \cite{SST}, where they showed its applications to partial differential equations and the potential theory.  Olsen's inequality was also extended not only to generalized Morrey spaces but also to the operator $T_{\rho}$, see \cite{GE} and \cite{KNS}. In this note, we shall prove Olsen-type inequalities in generalized Fofana spaces involving the Riesz potential and generalized fractional integral operators.

This paper is organized as follows. In Section \ref{S2}, we give some properties of generalized Fofana spaces and we recall the boundedness result of the Hardy-Littlewwod maximal operator on these spaces. Section \ref{S3} is devoted to norm inequalities for the classical fractional integral operator and its corresponding fractional maximal operator in generalized Fofana spaces. In Section \ref{S4}, we investigate the boundedness of generalized fractional integral operators in generalized Fofana spaces. Finally, in Section \ref{S5}, we establish Olsen-type inequalities. \\ 
Throughout this note, the letter $C$ will be used as a generic positive constant not depending on the relevant variables. Its value may change from one occurrence to another. We shall also use the abbreviation $ A \lsim \mathrm{ B}$ for the inequalities $ A\leq C B$. If $ A \lsim B$ and $ B \lsim  A$, then we write $ A\approx  B$. We adopt the usual convention $\frac{1}{\infty} =0$.

\section{Preliminaries}
\label{S2}
 Let $1\leq q,\, p \leq\infty$ and let $r>0$. Introduced by Wiener \cite{W} and systematically studied by Holland \cite{FH}, the amalgam spaces $(L^{q},\, l^{p})_{r}(\mathbb{R}^{d})$ are defined as the set of all functions $f \in L^{0}(\mathbb{R}^{d})$ satisfying $ \: _{r}\widetilde{\left\|f\right\|}_{q,p}<\infty$, where 
 $$_{r}\widetilde{\left\|f\right\|}_{q,p}= \left\|\left\{  \left\|f\chi_{Q^{r}_{k}}\right\|_{q} \right\}_{k \in \mathbb{Z}^{d} }\right\|_{\ell^{p}},  $$
with $k=(k_{1},\, ...,\,k_{d})\in \mathbb{Z}^{d}$ and
 $ Q^{r}_{k} = \prod_{i=1}^{d}  [rk_{i}, r(k_{i}+1)) $.\\
For more informations about amalgam spaces, we refer to \cite{FS2} and the references therein. We recall the following properties. 
 \begin{proposition} 
 \label{prop 3} (See \cite{FH} or  \cite{FS2}.)
Let $r>0$.
\begin{enumerate} 
\item Let $1\leq q,\, p \leq\infty$. Then, the following assertions
hold.
\begin{enumerate}
\item  $((L^{q},\, l^{p})_{r}(\mathbb{R}^{d}),   \: _{r}\widetilde{\left\|\cdot  \right\|}_{q,p}) $ is a complex Banach space.
\item  There exists a constant $ C>0 $ such that
$$C^{-1}\:r^{\frac{d}{p}}\:_{r}\widetilde{\left\|f\right\|}_{q,p}\leq \: _{r}\left\|f\right\|_{q,p} \leq C r^{\frac{d}{p}}\:_{r}\widetilde{\left\|f\right\|}_{q,p},\;\;\;\;\; f \in L^{0}(\mathbb{R}^{d}).$$
\end{enumerate}
\item Let $1\leq q_1,\, p_1 \leq \infty$ and Let $1\leq q_2,\, p_2 \leq \infty$ such that
$$\frac{1}{q_1}+ \frac{1}{q_2}=\frac{1}{q} \leq 1 \text{ and }  \frac{1}{p_1}+ \frac{1}{p_2}=\frac{1}{p} \leq 1. $$
If $f \in (L^{q_1},\, l^{p_1})_{r}(\mathbb{R}^{d})$ and $g \in (L^{q_2},\, l^{p_2})_{r}(\mathbb{R}^{d})$, then we have $fg \in (L^{q},\, l^{p})_{r}(\mathbb{R}^{d})$. Moreover
$$ _{r}\widetilde{\left\|fg\right\|}_{q,p}\leq \,  _{r}\widetilde{\left\|f\right\|}_{q_1,p_1}\, _{r}\widetilde{\left\|g\right\|}_{q_2,p_2}.$$
\end{enumerate}
 \end{proposition}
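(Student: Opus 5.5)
The plan is to prove the three parts separately, working with the cube decomposition $\{Q^r_k\}_{k\in\mathbb{Z}^d}$, which for fixed $r$ is a partition of $\mathbb{R}^d$ into disjoint cubes of side $r$. Throughout write $a_k=\|f\chi_{Q^r_k}\|_q$.

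For part (1)(a), the map $f\mapsto\{a_k\}$ realizes $(L^q,l^p)_r(\mathbb{R}^d)$ as an $\ell^p$-direct sum of the Banach spaces $L^q(Q^r_k)$. The norm axioms follow from those of $L^q$ and $\ell^p$ (Minkowski in each). For completeness, take a Cauchy sequence $(f_n)$.
\begin{itemize}
\item Each restriction $f_n\chi_{Q^r_k}$ is Cauchy in $L^q(Q^r_k)$, because $a_k(f_n-f_m)\le {}_r\widetilde{\|f_n-f_m\|}_{q,p}$. So it converges to some $g_k$.
\item Set $f=\sum_k g_k\chi_{Q^r_k}$; this is measurable because the sum is locally finite.
\item To show $f_n\to f$, apply Fatou's lemma for $\ell^p$ (or take the supremum over finite subsets of indices when $p=\infty$) to the inequality for $f_n-f_m$, and let $m\to\infty$.
\end{itemize}

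For part (1)(b), compare each ball with a fixed number of cubes and conversely.
\begin{itemize}
\item \emph{Upper bound.} For $y\in Q^r_j$, the ball $B(y,r)$ is contained in the union of the cubes $Q^r_k$ with $|k-j|_\infty\le 1$; there are $3^d$ of them. Hence
\[
\|f\chi_{B(y,r)}\|_q\le \sum_{|k-j|_\infty\le1}a_k .
\]
Taking the $L^p$ norm in $y$ over each $Q^r_j$ (volume $r^d$) and summing over $j$ gives
\[
{}_r\|f\|_{q,p}\le r^{d/p}\,\Big\|\Big\{\sum_{|k-j|_\infty\le 1}a_k\Big\}_j\Big\|_{\ell^p}\le 3^d r^{d/p}\,{}_r\widetilde{\|f\|}_{q,p},
\]
where the last step is the triangle inequality in $\ell^p$ for the $3^d$ translates of $\{a_k\}$.
\item \emph{Lower bound.} Each $Q^r_k$ has diameter $\sqrt d\,r$, so it is covered by $N=N(d)$ balls of radius $r/2$. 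If $y$ lies in a ball $B(z,r/2)$, then $B(z,r/2)\subset B(y,r)$. So each covering ball is contained in $B(y,r)$ for every $y$ in a set of measure $\approx r^d$ lying within distance $\sim r$ of $Q^r_k$. Consequently
\[
a_k\le \sum_{i=1}^N \inf_{y\in B(z_i,r/2)}\|f\chi_{B(y,r)}\|_q\lesssim r^{-d/p}\sum_{i=1}^N\|\,\|f\chi_{B(\cdot,r)}\|_q\|_{L^p(B(z_i,r/2))}.
\]
Raise this to the power $p$ and sum over $k$. The regions $B(z_i,r/2)$ associated with different $k$ overlap only boundedly, since each lies within distance $\sim r$ of its cube. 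This gives ${}_r\widetilde{\|f\|}_{q,p}\lesssim r^{-d/p}\,{}_r\|f\|_{q,p}$, and the case $p=\infty$ is analogous.
\end{itemize}
This overlap-counting step is the only real obstacle: the constants must depend only on $d$ and not on $r$, and that holds because every count above is scale invariant.

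For part (2), apply Hölder's inequality twice. On each cube,
\[
\|fg\chi_{Q^r_k}\|_q\le \|f\chi_{Q^r_k}\|_{q_1}\,\|g\chi_{Q^r_k}\|_{q_2}
\]
by Hölder with $\frac1{q_1}+\frac1{q_2}=\frac1q$. Then Hölder's inequality on sequences with $\frac1{p_1}+\frac1{p_2}=\frac1p$ gives
\[
{}_r\widetilde{\|fg\|}_{q,p}\le {}_r\widetilde{\|f\|}_{q_1,p_1}\,{}_r\widetilde{\|g\|}_{q_2,p_2},
\]
with constant exactly $1$, as stated.
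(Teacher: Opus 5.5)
Your proof is correct. Completeness follows from the direct-sum identification with Fatou's lemma in $\ell^p$; part (1)(b) follows from the inclusions $B(y,r)\subset\bigcup_{|k-j|_\infty\le 1}Q^r_k$ for $y\in Q^r_j$ and $B(z,r/2)\subset B(y,r)$ for $y\in B(z,r/2)$, together with the bounded-overlap count, with constants depending only on $d$; and part (2) is H\"older applied on each cube and then in $\ell^p$. The paper gives no proof of this proposition (it only cites Holland and Fournier--Stewart), and your argument is essentially the standard one from those references.
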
 
Let $1\leq q \leq p\leq \infty$. The fact that the function $\phi$ belongs to the set $\mathcal{G}_{q,p}$ ensures that the generalized Fofana space  is non-trivial. Furthermore, if $\phi \in \mathcal{G}_{q,p}$ then there exist constants $C_1>0$ and $C_2>0$ such that:
 \begin{equation}\label{eq 1}
 \frac{1}{2} \leq \frac{r}{s} \leq 2 \implies \frac{1}{C_1} \leq \frac{\phi (r)}{\phi(s)} \leq C_1;
 \end{equation}
 and  
\begin{equation}
\label{lem 2}
  (2^{i}r)^{d(\frac{1}{q}+\frac{1}{p})} \phi(2^{i+1}r) \leq C_2 \displaystyle\int_{2^{i}r}^{2^{i+1}r} \phi(t)t^{d(\frac{1}{q}+\frac{1}{p})-1} dt, 
 \end{equation}  
for any positive integer $i$ and for any $r>0$.\\
Note that the doubling condition \eqref{eq 1} is well-known and the proof of inequality \eqref{lem 2} is similar to that of Lemma 3.3 in \cite{NKD}.

The family of spaces $ (L^{q},L^{p})^{\phi}(\mathbb{R}^{d}) $ is increasing (in the sense of inclusion) with respect to the $ p $ power. More precisely, we have the following proposition. 
 \begin{proposition}(\cite[Proposition 2.7 (2)]{NKD})
 \label{prop 1}
 Let  $ f \in L^{0}(\mathbb{R}^{d}). $ If $1\leq q\leq p_{1} \leq p_{2} \leq\infty$ and $ \phi \in \mathcal{G}_{q,p_{1}} $, then 
 $$\left\| f \right\|_{(L^{q},L^{p_{2}})^{\phi}(\mathbb{R}^{d})} \lesssim \left\|f\right\|_{(L^{q},L^{p_{1}})^{\phi}(\mathbb{R}^{d})},$$ and consequently $(L^{q},L^{p_{1}})^{\phi}(\mathbb{R}^{d})\subset (L^{q},L^{p_{2}})^{\phi}(\mathbb{R}^{d}).$
 \end{proposition}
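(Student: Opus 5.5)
We need $\|f\|_{(L^q,L^{p_2})^\phi}\lesssim \|f\|_{(L^q,L^{p_1})^\phi}$ when $\phi\in\mathcal G_{q,p_1}$. The plan is to fix $r>0$ and pass to the discrete amalgam norms via Proposition \ref{prop 3}(1)(b). Up to constants,
\[
r^{-\frac{d}{q}-\frac{d}{p_2}}\,{}_r\|f\|_{q,p_2}\approx r^{-\frac{d}{q}}\,{}_r\widetilde{\|f\|}_{q,p_2}.
\]
Since $\ell^{p_1}\hookrightarrow \ell^{p_2}$ with norm $1$ for $p_1\le p_2$, we get ${}_r\widetilde{\|f\|}_{q,p_2}\le {}_r\widetilde{\|f\|}_{q,p_1}$. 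Applying Proposition \ref{prop 3}(1)(b) once more gives
\[
r^{-\frac{d}{q}}\,{}_r\widetilde{\|f\|}_{q,p_1}\approx r^{-\frac{d}{q}-\frac{d}{p_1}}\,{}_r\|f\|_{q,p_1}.
\]
The constants in Proposition \ref{prop 3}(1)(b) depend only on $d,q,p$ and not on $r$, so these comparisons are uniform in $r$.

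Dividing by $\phi(r)$ then gives, uniformly in $r$,
\[
\frac{1}{\phi(r)}r^{-d(\frac1q+\frac1{p_2})}\,{}_r\|f\|_{q,p_2}\lesssim \frac{1}{\phi(r)}r^{-d(\frac1q+\frac1{p_1})}\,{}_r\|f\|_{q,p_1}\le \|f\|_{(L^q,L^{p_1})^\phi}.
\]
Taking the supremum over $r>0$ yields the stated inequality. The inclusion follows immediately.

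For the right-hand side to be a genuine space, we also need $\phi\in\mathcal G_{q,p_2}$. If $\phi(t)t^{d/p_1}$ is almost decreasing, then $\phi(t)t^{d/p_2}=\phi(t)t^{d/p_1}\,t^{d/p_2-d/p_1}$ is almost decreasing too, because $d/p_2-d/p_1\le0$. The almost increasing condition involves only $q$, so it is unchanged.

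The only point needing care is the uniformity in $r$ of the constants in Proposition \ref{prop 3}(1)(b). This holds by a scaling argument (dilation by $r$), and I expect no real obstacle there. The cases $p_1=\infty$ or $p_2=\infty$ are covered by the same argument with sup norms.
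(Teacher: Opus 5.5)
Your proof is correct. The paper does not prove this statement itself; it quotes it from \cite{NKD}, so there is no in-paper argument to compare against.

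Your route is the standard one for Fofana-type spaces. You use the comparison ${}_r\|f\|_{q,p}\approx r^{d/p}\,{}_r\widetilde{\|f\|}_{q,p}$ from Proposition~\ref{prop 3}(1)(b) in both directions, together with the contractive embedding $\ell^{p_1}\hookrightarrow\ell^{p_2}$. This gives $r^{-d/p_2}\,{}_r\|f\|_{q,p_2}\le C\,r^{-d/p_1}\,{}_r\|f\|_{q,p_1}$ for every $r>0$.

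Your dilation argument correctly justifies that $C$ does not depend on $r$, which the paper's phrasing of Proposition~\ref{prop 3} leaves implicit. With $f_r(x)=f(rx)$ one has ${}_r\|f\|_{q,p}=r^{d/q+d/p}\,{}_1\|f_r\|_{q,p}$ and ${}_r\widetilde{\|f\|}_{q,p}=r^{d/q}\,{}_1\widetilde{\|f_r\|}_{q,p}$, so the $r=1$ constants apply for all $r$.

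Your computation also shows that the norm inequality holds for every positive $\phi$. The hypothesis $\phi\in\mathcal{G}_{q,p_1}$ is needed only so that both spaces lie within the paper's definitions. Your check that $\mathcal{G}_{q,p_1}\subset\mathcal{G}_{q,p_2}$, by multiplying by the nonincreasing factor $t^{d/p_2-d/p_1}$, settles that.
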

We now state the following H\"older-type inequality in generalized Fofana spaces.
\begin{proposition}\label{prop 2}
Assume $1\leq q_{1} \leq p_{1}\leq \infty$ and $1\leq q_{2} \leq p_{2}\leq \infty$.
Let $\phi \in \mathcal{G}_{q,p} , \phi_{1} \in \mathcal{G}_{q_{1},p_{1}} $ and $\phi_{2} \in \mathcal{G}_{q_{2},p_{2}} $ such that $\phi_{1}(r) \phi_{2}(r) \leq \phi(r) $ for all $r>0$. If
	$$\frac{1}{q_{1}} + \frac{1}{q_{2}} = \frac{1}{q} \leq 1  \text{ and } \frac{1}{p_{1}} + \frac{1}{p_{2}} = \frac{1}{p} \leq 1,$$
then for all $(f,\,g) \in L^{0} (\mathbb{R}^{d})\times L^{0} (\mathbb{R}^{d})$ we have
	$$\Vert f g\Vert_{(L^{q},L^{p})^{\phi}(\mathbb{R}^{d})} \lesssim \Vert f \Vert_{(L^{q_{1}},L^{p_{1}})^{\phi_{1}}(\mathbb{R}^{d}) } \Vert g \Vert_{(L^{q_{2}},L^{p_{2}})^{\phi_{2}}(\mathbb{R}^{d})}.$$
\end{proposition}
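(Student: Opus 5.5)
The plan is to work at a fixed radius $r>0$, pass to the discrete amalgam norms, apply the amalgam Hölder inequality, and then return to the continuous norms and take the supremum over $r$.

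\emph{Step 1 (to the amalgam norm).} Fix $r>0$. By Proposition \ref{prop 3} (1)(b) with exponents $(q,p)$,
$$_{r}\left\|fg\right\|_{q,p}\le C\, r^{\frac{d}{p}}\;{}_{r}\widetilde{\left\|fg\right\|}_{q,p}.$$

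\emph{Step 2 (amalgam Hölder).} Since $\frac1{q_1}+\frac1{q_2}=\frac1q\le1$ and $\frac1{p_1}+\frac1{p_2}=\frac1p\le1$, Proposition \ref{prop 3} (2) gives
$$_{r}\widetilde{\left\|fg\right\|}_{q,p}\le {}_{r}\widetilde{\left\|f\right\|}_{q_1,p_1}\;{}_{r}\widetilde{\left\|g\right\|}_{q_2,p_2}.$$
This uses only the scalar Hölder inequality on each cube $Q_k^r$, followed by the discrete Hölder inequality in $\ell^p$.

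\emph{Step 3 (back to the continuous norms).} Applying Proposition \ref{prop 3} (1)(b) in the opposite direction with exponents $(q_i,p_i)$ gives
$$_{r}\widetilde{\left\|f\right\|}_{q_1,p_1}\le C r^{-\frac d{p_1}}\,{}_{r}\left\|f\right\|_{q_1,p_1}, \qquad {}_{r}\widetilde{\left\|g\right\|}_{q_2,p_2}\le C r^{-\frac d{p_2}}\,{}_{r}\left\|g\right\|_{q_2,p_2}.$$
Because $\frac d{p_1}+\frac d{p_2}=\frac dp$, the powers of $r$ cancel and
$$_{r}\left\|fg\right\|_{q,p}\lesssim {}_{r}\left\|f\right\|_{q_1,p_1}\,{}_{r}\left\|g\right\|_{q_2,p_2}.$$
The constant is uniform in $r$, since the constants in Proposition \ref{prop 3} (1)(b) depend only on $d,q,p$.

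\emph{Step 4 (normalise and take the supremum).} Multiply both sides by $\phi(r)^{-1}r^{-d(\frac1q+\frac1p)}$. The exponent splits as
$$d\Bigl(\tfrac1q+\tfrac1p\Bigr)=d\Bigl(\tfrac1{q_1}+\tfrac1{p_1}\Bigr)+d\Bigl(\tfrac1{q_2}+\tfrac1{p_2}\Bigr),$$
and the hypothesis $\phi_1(r)\phi_2(r)\le\phi(r)$ gives $\phi(r)^{-1}\le \phi_1(r)^{-1}\phi_2(r)^{-1}$. Hence the left side is at most the product of
$$\frac{1}{\phi_1(r)}r^{-d(\frac1{q_1}+\frac1{p_1})}\,{}_{r}\|f\|_{q_1,p_1} \quad\text{and}\quad \frac{1}{\phi_2(r)}r^{-d(\frac1{q_2}+\frac1{p_2})}\,{}_{r}\|g\|_{q_2,p_2},$$
which is bounded by $\|f\|_{(L^{q_1},L^{p_1})^{\phi_1}}\|g\|_{(L^{q_2},L^{p_2})^{\phi_2}}$. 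Taking the supremum over $r>0$ proves the proposition.

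There is no substantial obstacle; the argument is routine bookkeeping. The one point to check is that Step 2 applies to arbitrary measurable $f,g$ and not only to amalgam members. If $f$ or $g$ has infinite norm the inequality is trivial, so one may assume both norms are finite. Then $f$ and $g$ lie in the corresponding amalgam spaces for every $r$, because ${}_r\|f\|_{q_1,p_1}<\infty$ for each $r$ and similarly for $g$, and Proposition \ref{prop 3} (2) applies. The cases $p_i=\infty$ or $q_i=\infty$ are covered by the same Hölder inequalities under the convention $\frac1\infty=0$.
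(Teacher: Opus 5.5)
Your proof is correct and is essentially the paper's argument: at each fixed $r$ you pass to the amalgam norms via Proposition \ref{prop 3}, apply the amalgam H\"older inequality, normalise using $\phi_1\phi_2\le\phi$, and take the supremum over $r$. You also make explicit two points the paper leaves implicit: the cancellation $r^{d/p}=r^{d/p_1}r^{d/p_2}$, and the fact that the constants are uniform in $r$.
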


\begin{proof}
Let $(f,\,g) \in L^{0} (\mathbb{R}^{d})\times L^{0} (\mathbb{R}^{d})$. Suppose that $\Vert f g\Vert_{(L^{q},L^{p})^{\phi}(\mathbb{R}^{d})} \neq 0$, $\Vert f \Vert_{(L^{q_{1}},L^{p_{1}})^{\phi_{1}}(\mathbb{R}^{d}) } < \infty$ and  $ \Vert g \Vert_{(L^{q_{2}},L^{p_{2}})^{\phi_{2}}(\mathbb{R}^{d})}< \infty$, since otherwise the desired result is obvious.\\ Since $_{r}\left\|f\right\|_{q_{1},p_{1}}<\infty$ and $_{r}\left\|f\right\|_{q_{2},p_{2}}< \infty$ for all $r>0$, by applying Proposition \ref{prop 3}, we obtain 
\begin{equation*}
    _{r}\left\|fg\right\|_{q,p} \lesssim \: _{r}\left\|f\right\|_{q_{1},p_{1}} \: _{r}\left\|f\right\|_{q_{2},p_{2}}
\end{equation*}
for all $r>0$. Multiplying both sides of the previous inequality by
$\frac{1}{\phi(r)}r^{d(-\frac{1}{q}-\frac{1}{p})}$, we get 
\begin{eqnarray*}
&&\frac{1}{\phi(r)}r^{d(-\frac{1}{q}-\frac{1}{p})}\: _{r}\left\|fg\right\|_{q,p} \\ & \lesssim &  \frac{1}{\phi_{1}(r)}r^{d(-\frac{1}{q_{1}}-\frac{1}{p_{1}})}\:_{r}\left\|f\right\|_{q_{1},p_{1}} \frac{1}{\phi_{2}(r)}r^{d(-\frac{1}{q_{2}}-\frac{1}{p_{2}})}\: _{r}\left\|f\right\|_{q_{2},p_{2}}\\
& \lesssim & \Vert f \Vert_{(L^{q_{1}},L^{p_{1}})^{\phi_{1}}(\mathbb{R}^{d}) } \Vert g \Vert_{(L^{q_{2}},L^{p_{2}})^{\phi_{2}}(\mathbb{R}^{d})}.
\end{eqnarray*}
We conclude the proof by taking in the left-hand side, the supremum  over all $r>0$.
\end{proof}
We end this section by recalling the boundedness property of the Hardy-Littlewood maximal operator on generalized Fofana spaces.
\begin{theorem}(\cite[Theorem 3.1]{NKD})
\label{theo 1}
 Let $1 < q\leq p \leq \infty$ and $\phi\in \mathcal{G}_{q,p}$. Assume that there is a constant $ C > 0 $ such that for any $ r >0 $  
  \begin{equation}\label{eqq 2}
 \displaystyle\int_{r}^{\infty} \phi^{q}(t)t^{\frac{dq}{p}-1} dt \leq C   \phi^{q}(r)r^{\frac{dq}{p}}.
  \end{equation}
  Then $$\left\Vert Mf \right\Vert_{(L^{q},L^{p})^{\phi}(\mathbb{R}^{d})} \lsim  \left\Vert f  \right\Vert_{(L^{q},L^{p})^{\phi}(\mathbb{R}^{d})}, \quad f \in (L^{q},L^{p})^{\phi}(\mathbb{R}^{d}).$$
\end{theorem}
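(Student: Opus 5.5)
The statement is quoted from \cite[Theorem 3.1]{NKD}, so it can simply be cited. If we want to reprove it, the plan is the classical Nakai argument for generalized Morrey spaces, adapted to the amalgam-type quasi-norm $_{r}\|\cdot\|_{q,p}$. Fix $r>0$ and $y\in\mathbb{R}^d$, and split $f=f_1+f_2$ with $f_1=f\chi_{B(y,2r)}$ and $f_2=f-f_1$. By sublinearity, $Mf\le Mf_1+Mf_2$, so it suffices to control $\|(Mf_i)\chi_{B(y,r)}\|_q$ for $i=1,2$. Then take the $L^p$ norm in $y$ and multiply by $\phi(r)^{-1}r^{-d(1/q+1/p)}$.

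\textbf{Local part.} Since $1<q$, the $L^q$-boundedness of $M$ gives
$$\|(Mf_1)\chi_{B(y,r)}\|_q\lesssim \|f\chi_{B(y,2r)}\|_q .$$
Taking the $L^p$ norm in $y$ yields $_{2r}\|f\|_{q,p}$. By Proposition \ref{prop 3}(1)(b), this is comparable, up to the factor $2^{d/p}$, to $r^{d/p}\,_{r}\widetilde{\|f\|}_{q,p}$: a cube of side $2r$ is covered by $2^d$ cubes of side $r$. Hence it is $\lesssim {}_{r}\|f\|_{q,p}$, and the term is bounded by $\|f\|_{(L^q,L^p)^\phi}$.

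\textbf{Global part.} For $x\in B(y,r)$, any ball centred at $x$ that meets the support of $f_2$ has radius at least $r$. Hence
$$Mf_2(x)\lesssim \sup_{t\ge r} t^{-d}\int_{B(y,2t)}|f| \lesssim \sup_{t\ge r} t^{-d/q}\,\|f\chi_{B(y,2t)}\|_q ,$$
using Hölder. This gives
$$\|(Mf_2)\chi_{B(y,r)}\|_q\lesssim r^{d/q}\sup_{t\ge r}t^{-d/q}\|f\chi_{B(y,2t)}\|_q .$$

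\textbf{Main obstacle.} We cannot take the $L^p_y$ norm inside a supremum over $t$. The step I would try first is to dominate the supremum by a dyadic sum raised to the $q$-th power:
$$\sup_{t\ge r}(\cdots)^q\lesssim \sum_{i\ge0}(2^ir)^{-d}\|f\chi_{B(y,2^{i+1}r)}\|_q^q ,$$
which converts to $\int_r^\infty t^{-d}\|f\chi_{B(y,2t)}\|_q^q\,\frac{dt}{t}$. Then apply Minkowski's integral inequality in $L^{p/q}_y$, valid since $p\ge q$, to get
$$\Big\|\|(Mf_2)\chi_{B(y,r)}\|_q\Big\|_{L^p_y}^q\lesssim r^{d}\int_r^\infty t^{-d}\,\big(_{2t}\|f\|_{q,p}\big)^q\frac{dt}{t}.$$
Bound $_{2t}\|f\|_{q,p}\lesssim \phi(t)t^{d(1/q+1/p)}\|f\|$, using the doubling property \eqref{eq 1}. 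The right-hand side then becomes
$$r^d\|f\|^q\int_r^\infty \phi^q(t)t^{dq/p-1}\,dt\lesssim r^{d}\phi^q(r)r^{dq/p}\|f\|^q$$
by hypothesis \eqref{eqq 2}. Taking $q$-th roots gives $\phi(r)r^{d(1/q+1/p)}\|f\|$, as required.

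The delicate point is justifying the passage from the supremum to the sum/integral without losing the exponent. Using the $\ell^q$ sum instead of the supremum is harmless, since the sum dominates the supremum. Convergence is exactly what \eqref{eqq 2} guarantees. For $p=\infty$ the Minkowski step is trivial, and the argument reduces to Nakai's proof for generalized Morrey spaces.
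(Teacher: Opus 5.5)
Your proposal is correct. The paper gives no proof of this statement; it simply imports it from \cite[Theorem 3.1]{NKD}, as your first sentence says, so there is no in-paper argument to compare your sketch with.

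Your self-contained reproof is nevertheless valid.
\begin{itemize}
\item The local term is fine. You can shortcut the amalgam comparison by using ${}_{2r}\|f\|_{q,p}\le \phi(2r)(2r)^{d(\frac{1}{q}+\frac{1}{p})}\|f\|_{(L^{q},L^{p})^{\phi}(\mathbb{R}^{d})}$ together with the doubling property \eqref{eq 1}.
\item For the global term, the chain $\|(Mf_2)\chi_{B(y,r)}\|_q^q\lesssim r^d\sup_{t\ge r}t^{-d}\|f\chi_{B(y,2t)}\|_q^q\lesssim r^d\int_r^\infty t^{-d}\|f\chi_{B(y,2t)}\|_q^q\,\frac{dt}{t}$ works. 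Follow it with Minkowski's inequality in $L^{p/q}_y$ (valid because $p\ge q$), then doubling, then \eqref{eqq 2}. This yields exactly $\phi(r)r^{d(\frac{1}{q}+\frac{1}{p})}\|f\|_{(L^{q},L^{p})^{\phi}(\mathbb{R}^{d})}$.
\end{itemize}

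One cosmetic point: for $t\in[2^ir,2^{i+1}r)$ the relevant ball is $B(y,2^{i+2}r)$. So your dyadic domination holds only after an index shift, at the cost of a factor $2^d$. You can also skip the sum altogether, since $t^{-d}\|f\chi_{B(y,2t)}\|_q^q\le 2^d(\ln 2)^{-1}\int_t^{2t}s^{-d}\|f\chi_{B(y,2s)}\|_q^q\,\frac{ds}{s}$.

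Your argument also shows why \eqref{eqq 2} is stated with $\phi^q$ and $t^{dq/p}$: it is used exactly when the supremum over $t$ is pulled out of the $L^p_y$-norm. For $p=\infty$, the almost-decreasing property of $\phi$ already controls that supremum without it.
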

\section{Norm inequalities for classical fractional integral and fractional maximal operators}
\label{S3}
Let us begin this section with the following notation used later in the work. For $1<q<\infty$, we shall denote by $q'$ its conjugate exponent: $\frac{1}{q}+\frac{1}{q'}=1$. Our first result is the below theorem stating the boundedness property of the classical fractional integral operator in generalized Fofana spaces.  
\begin{theorem}\label{theo 2}
Assume that $1 < q\leq p \leq \infty$, $ 0 < \gamma < \frac{d}{q}$ and $-\frac{d}{q} \leq \beta  < -\gamma $. Let $\phi\in \mathcal{G}_{q,p}$ satisfying condition (\ref{eqq 2}) and $\phi(r) \lsim  r^{\beta}$ for every $r > 0$. \\ Put $$\frac{1}{\bar{p}} = \frac{1}{p}\left(1 +\frac{\gamma}{\beta}\right) \quad \text{ and } \quad \frac{1}{\bar{q}} = \frac{1}{q}\left(1 +\frac{\gamma}{\beta}\right). $$
 Then, for all  $f \in (L^{q},L^{p})^{\phi}(\mathbb{R}^{d})$, we have
 $$\left\Vert I_{\gamma}f \right\Vert_{(L^{\bar{q}},L^{\bar{p}})^{\psi}(\mathbb{R}^{d})} \lsim  \left\Vert f  \right\Vert_{(L^{q},L^{p})^{\phi}(\mathbb{R}^{d})},$$ where  $\psi(r)= (\phi(r))^{\frac{q}{\bar{q}}}$.
\end{theorem}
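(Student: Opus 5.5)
The plan is a Hedberg-type pointwise estimate, followed by the boundedness of $M$ from Theorem \ref{theo 1} and a homogeneity argument that converts norms. Fix $f\in (L^{q},L^{p})^{\phi}(\mathbb{R}^{d})$ and normalize $\|f\|_{(L^{q},L^{p})^{\phi}}=1$. For $x\in\mathbb{R}^d$ and a parameter $\delta>0$, split
\[
|I_\gamma f(x)| \le \int_{|x-y|<\delta}\frac{|f(y)|}{|x-y|^{d-\gamma}}dy+\int_{|x-y|\ge\delta}\frac{|f(y)|}{|x-y|^{d-\gamma}}dy .
\]
The local part is handled by the usual dyadic annuli decomposition, which gives the bound $\lesssim \delta^{\gamma}Mf(x)$.

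For the global part, decompose into annuli $2^{i}\delta\le|x-y|<2^{i+1}\delta$. Each annulus term is at most $(2^i\delta)^{\gamma-d}\int_{B(x,2^{i+1}\delta)}|f|$. I then need the Morrey-type pointwise control $\int_{B(x,r)}|f|\lesssim r^{d(1-1/q)}\|f\chi_{B(x,r)}\|_q \lesssim r^{d}\phi(r)\|f\|$. This follows from the embedding of Proposition \ref{prop 1} into $(L^q,L^\infty)^\phi=\mathcal M^\phi_q$, together with H\"older's inequality. Summing over $i$ with \eqref{lem 2} turns the sum into $\int_\delta^\infty t^{\gamma}\phi(t)\,dt/t$. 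Condition \eqref{eqq 2}, combined with the almost monotonicity of $\phi$ and the hypothesis $\phi(r)\lesssim r^{\beta}$ with $\beta<-\gamma$, should bound this by $\lesssim \delta^{\gamma}\phi(\delta)\lesssim \delta^{\gamma+\beta}$. The hardest point is to keep this estimate in terms of $\phi(\delta)$ as well as the power. I would first try to get $\int_\delta^\infty t^{\gamma-1}\phi(t)dt \lesssim \delta^\gamma\phi(\delta)$ from almost-decrease of $t^{d/p}\phi(t)$ and \eqref{eqq 2} via H\"older in $t$.

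Combining the two parts gives $|I_\gamma f(x)|\lesssim \delta^\gamma Mf(x)+\delta^{\gamma+\beta}$. Choose $\delta=(Mf(x))^{1/\beta}$, which balances the terms since $\beta<0$. This gives
\[
|I_\gamma f(x)|\lesssim (Mf(x))^{1+\gamma/\beta}=(Mf(x))^{q/\bar q},
\]
and by homogeneity the general bound $|I_\gamma f|\lesssim (Mf)^{q/\bar q}\|f\|^{1-q/\bar q}$.

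Finally, convert norms. Set $\theta=q/\bar q=p/\bar p=1+\gamma/\beta\in(0,1)$. By the definition of $\bar q,\bar p$, we have $\|\,|g|^{\theta}\chi_{B(y,r)}\|_{\bar q}=\|g\chi_{B(y,r)}\|_q^{\theta}$, so ${}_r\||g|^\theta\|_{\bar q,\bar p}=({}_r\|g\|_{q,p})^{\theta}$. The normalizing factor also matches: $r^{-d(1/\bar q+1/\bar p)}\psi(r)^{-1}=(r^{-d(1/q+1/p)}\phi(r)^{-1})^{\theta}$. Hence
\[
\|(Mf)^\theta\|_{(L^{\bar q},L^{\bar p})^{\psi}}=\|Mf\|^{\theta}_{(L^q,L^p)^\phi}\lesssim \|f\|^\theta,
\]
using Theorem \ref{theo 1}. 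The factor $\|f\|^{1-\theta}$ from the homogeneous pointwise bound then yields the claim. One should also check that $\psi\in\mathcal G_{\bar q,\bar p}$, which is immediate from the powers. The constraint $\beta\ge -d/q$ guarantees $\bar q\ge q$ is finite and $\bar q\le\bar p$.
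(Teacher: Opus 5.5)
Your overall argument is the paper's own. You use the same split with the local part bounded by $\delta^\gamma Mf(x)$, and the same Morrey-type bound on the annuli through $\|f\|_{(L^q,L^\infty)^\phi}$. You make the same balancing choice of the radius, and the same power trick ${}_r\||g|^\theta\|_{\bar q,\bar p}=({}_r\|g\|_{q,p})^\theta$ with $\theta=1+\gamma/\beta$. You then close with Theorem \ref{theo 1} and Proposition \ref{prop 1}. The paper keeps $\|f\|_{(L^q,L^\infty)^\phi}$ in the pointwise bound and applies the embedding only at the end; your normalizing first is immaterial.

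The one thing to fix is the step you flag as the hardest. The inequality $\int_\delta^\infty t^{\gamma-1}\phi(t)\,dt\lesssim\delta^\gamma\phi(\delta)$ does not follow from the hypotheses in general. Your H\"older-in-$t$ route against \eqref{eqq 2} closes only when $\gamma<d/p$. For $p=\infty$ it would need $\int_\delta^\infty t^{q'\gamma-1}\,dt<\infty$, which always diverges. Moreover, admissible $\phi$ exist that break the inequality: take $\phi$ sitting far below $t^\beta$ at a sequence of scales $\delta_k$ and then decaying only like $t^{-\epsilon}$, with $0<\epsilon<\gamma$, over a long range until it meets $t^\beta$ again.

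You also do not need that inequality, since you only use the weaker bound $\delta^{\gamma+\beta}$. The paper gets it directly by inserting $\phi(2^{i+1}\delta)\lesssim(2^{i+1}\delta)^\beta$ term by term. The annular sum is then $\lesssim\delta^{\gamma+\beta}\sum_{i\ge 0}2^{i(\gamma+\beta)}$, a convergent geometric series because $\beta<-\gamma$. Neither \eqref{lem 2} nor \eqref{eqq 2} is needed at this point; \eqref{eqq 2} enters only through Theorem \ref{theo 1}.

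Finally, $\bar q<\infty$ comes from $\beta<-\gamma$, not from $\beta\ge -d/q$. The lower bound $\beta\ge-d/q$ is simply forced by $\phi\in\mathcal G_{q,p}$ together with $\phi(r)\lesssim r^\beta$ as $r\to\infty$.
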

\begin{remark}
Observe that Theorem \ref{theo 2} in the case $p=\infty$ was proved in \cite[Theorem 2.2]{GE}. Moreover,
when $\phi (r) = r^{-\frac{d}{\alpha}}$, where $1<q \leq \alpha \leq  p\leq \infty$ and $ 0< \frac{\gamma}{d} < \frac{1}{\alpha}$, Theorem \ref{theo 2} reduces to Corollary 4.4 (1) in \cite{JF}. 
\end{remark}
\begin{proof}[Proof of Theorem \ref{theo 2}]
Let $1 < q\leq p < \infty$ and let $\phi\in \mathcal{G}_{q,p}$ satisfying condition (\ref{eqq 2}). Suppose that $f \in (L^{q},L^{p})^{\phi}(\mathbb{R}^{d})\setminus \{ 0 \}$ since the case $f=0$ is obvious. Let $r>0$ and let $x \in \mathbb{R}^{d}$. We have
$$ I_{\gamma} f(x)=\displaystyle\int_{B(x,\, r)}\frac{f(y)}{\left|x-y\right|^{d-\gamma}}dy+\displaystyle\int_{\mathbb{R}^{d}\setminus B(x,\, r)}\frac{f(y)}{\left|x-y\right|^{d-\gamma}}dy = A(x) + B(x) .  $$
Following the proof of Theorem 2.2 of \cite{GE}, we may write on the one hand,
\begin{eqnarray} \label{rel 12}
& & \vert A(x) \vert= \left\vert \displaystyle\int_{B(x,\, r)}\frac{f(y)}{\left|x-y\right|^{d-\gamma}}dy \right\vert \leq C r^{\gamma} M f(x).
\end{eqnarray}
On the other hand, we have
\begin{eqnarray*}
& &\vert B(x) \vert  \\
& \leq & \sum\limits^{\infty}_{i=0} (2^{i}r)^{-d+\gamma} \vert B(x,2^{i+1}r) \vert^{\frac{d}{q'}} \left( \displaystyle\int_{B(x,2^{i+1}r)} \vert f(y)\vert^{q} dy\right)^{\frac{1}{q}}\\
 &\leq & C \sum\limits^{\infty}_{i=0} (2^{i}r)^{-d+\gamma} (2^{i+1}r)^{\frac{d}{q'}}(2^{i+1}r)^{\frac{d}{q}} \phi(2^{i+1}r) \\ & & \times \frac{1}{\phi(2^{i+1}r)}(2^{i+1}r)^{-\frac{d}{q}}\left( \displaystyle\int_{B(x,2^{i+1}r)} \vert f(y)\vert^{q} dy\right)^{\frac{1}{q}}\\
 &\leq & C \Vert f \Vert_{(L^{q},L^{\infty})^{\phi}(\mathbb{R}^{d})}\sum\limits^{\infty}_{i=0} (2^{i}r)^{-d+\gamma} (2^{i+1}r)^{d} \phi(2^{i+1}r).
\end{eqnarray*}
Since $\phi(r) \lsim  r^{\beta}$ for every $r > 0$, we get
\begin{eqnarray*} 
\vert B(x) \vert &\leq & C \Vert f \Vert_{(L^{q},L^{\infty})^{\phi}(\mathbb{R}^{d})}\sum\limits^{\infty}_{i=0} (2^{i}r)^{-d+\gamma} (2^{i+1}r)^{d} (2^{i+1}r)^{\beta}\\&\leq & C r^{\beta +\gamma}\Vert f \Vert_{(L^{q},L^{\infty})^{\phi}(\mathbb{R}^{d})}\sum\limits^{\infty}_{i=0} (2^{i})^{\beta +\gamma}.
\end{eqnarray*}
Hence,
\begin{eqnarray}\label{rel 13}
\vert B(x) \vert \leq C r^{\beta +\gamma}\Vert f \Vert_{(L^{q},L^{\infty})^{\phi}(\mathbb{R}^{d})}.
\end{eqnarray}
From \eqref{rel 12} and \eqref{rel 13}, we deduce that
\begin{eqnarray*}
\vert I_{\gamma} f(x) \vert &\leq& C\left(r^{\gamma} M f(x)+r^{\beta +\gamma}\Vert f \Vert_{(L^{q},L^{\infty})^{\phi}(\mathbb{R}^{d})}\right)
\end{eqnarray*}
for all $ r > 0 $. Thus, taking $r = \left( \frac{\Vert f \Vert_{(L^{q},L^{\infty})^{\phi}(\mathbb{R}^{d})}}{Mf(x)}         \right)^{\frac{1}{\beta}},$ we get 
\begin{eqnarray*}
\label{rel 33}
\vert I_{\gamma} f(x) \vert &\leq& C (Mf(x))^{\frac{\gamma + \beta }{\beta}} \Vert f \Vert^{-\frac{\gamma}{\beta}}_{(L^{q},L^{\infty})^{\phi}(\mathbb{R}^{d})}
\end{eqnarray*}
for all $ x \in \mathbb{R}^{d}$. It follows that 
\begin{eqnarray*}
_{r}\Vert I_{\gamma} f \Vert_{\bar{q},\bar{p}} &\leq& C\Vert f \Vert^{-\frac{\gamma}{\beta}}_{(L^{q},L^{\infty})^{\phi}(\mathbb{R}^{d})}\:_{r}\Vert M f \Vert^{1+ \frac{\gamma}{\beta}}_{\bar{q}(1+ \frac{\gamma}{\beta}),\bar{p}(1+ \frac{\gamma}{\beta})}\\
&\leq& C\Vert f \Vert^{-\frac{\gamma}{\beta}}_{(L^{q},L^{\infty})^{\phi}(\mathbb{R}^{d})}\:_{r}\Vert M f \Vert^{1+ \frac{\gamma}{\beta}}_{q,p}.
\end{eqnarray*}
Then, multiplying both sides of the previous inequality by\\ $ \frac{1}{\psi(r)} r^{d(-\frac{1}{\bar{q}}-\frac{1}{\bar{p}})} $, where  $\psi(r)= (\phi(r))^{\frac{q}{\bar{q}}}$, we get 
\begin{eqnarray*}
&&\frac{1}{\psi(r)} r^{d(-\frac{1}{\bar{q}}-\frac{1}{\bar{p}})} \:_{r}\Vert I_{\gamma} f \Vert_{\bar{q},\bar{p}} \\ &\leq& C\Vert f \Vert^{-\frac{\gamma}{\beta}}_{(L^{q},L^{\infty})^{\phi}(\mathbb{R}^{d})}    \left(  \frac{1}{\phi(r)} r^{d(-\frac{1}{q}-\frac{1}{p})}\:_{r}\Vert M f \Vert_{q,p}  \right)^{1+ \frac{\gamma}{\beta}}.
\end{eqnarray*}
Hence,  $$\left\Vert I_{\gamma}f \right\Vert_{(L^{\bar{q}},L^{\bar{p}})^{\psi}(\mathbb{R}^{d})} \leq C\Vert f \Vert^{-\frac{\gamma}{\beta}}_{(L^{q},L^{\infty})^{\phi}(\mathbb{R}^{d})} \left\Vert Mf  \right\Vert^{1+ \frac{\gamma}{\beta}}_{(L^{q},L^{p})^{\phi}(\mathbb{R}^{d})}.$$
By Theorem \ref{theo 1}, we get 
\begin{eqnarray*}
\label{rel 44}
\left\Vert I_{\gamma}f \right\Vert_{(L^{\bar{q}},L^{\bar{p}})^{\psi}(\mathbb{R}^{d})} \leq C\Vert f \Vert^{-\frac{\gamma}{\beta}}_{(L^{q},L^{\infty})^{\phi}(\mathbb{R}^{d})} \left\Vert f  \right\Vert^{1+ \frac{\gamma}{\beta}}_{(L^{q},L^{p})^{\phi}(\mathbb{R}^{d})}.
\end{eqnarray*}
We end the proof by appying Proposition \ref{prop 1}.
\end{proof}
As an immediate consequence of Theorem \ref{theo 2} and inequality (\ref{rel 11}), we have the
following result.
\begin{corollary}
Assume that $1 < q\leq p \leq \infty$, $ 0 < \gamma < \frac{d}{q}$ and $-\frac{d}{q} \leq \beta  < -\gamma $. Let $\phi\in \mathcal{G}_{q,p}$ satisfying condition (\ref{eqq 2}) and $\phi(r) \lsim  r^{\beta}$ for every $r > 0$. \\ Put $$\frac{1}{\bar{p}} = \frac{1}{p}\left(1 +\frac{\gamma}{\beta}\right) \quad \text{ and } \quad \frac{1}{\bar{q}} = \frac{1}{q}\left(1 +\frac{\gamma}{\beta}\right). $$
 Then, for all  $f \in (L^{q},L^{p})^{\phi}(\mathbb{R}^{d})$, we have
 $$\left\Vert M_{\gamma}f \right\Vert_{(L^{\bar{q}},L^{\bar{p}})^{\psi}(\mathbb{R}^{d})} \lsim  \left\Vert f  \right\Vert_{(L^{q},L^{p})^{\phi}(\mathbb{R}^{d})},$$ where  $\psi(r)= (\phi(r))^{\frac{q}{\bar{q}}}$.
\end{corollary}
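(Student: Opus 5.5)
The corollary follows from Theorem \ref{theo 2} together with the pointwise bound (\ref{rel 11}). Let $f \in (L^{q},L^{p})^{\phi}(\mathbb{R}^{d})$. Inequality (\ref{rel 11}) gives $0 \le M_{\gamma}f(x) \le C_d\, I_{\gamma}(|f|)(x)$ for every $x$. The key observation is that the quantity $_{r}\|\cdot\|_{\bar q,\bar p}$ is monotone: if $|g|\le |h|$ a.e., then $_{r}\|g\|_{\bar q,\bar p}\le {}_{r}\|h\|_{\bar q,\bar p}$ for each $r>0$. This holds because the inner $L^{\bar q}$ integral over $B(y,r)$ is monotone in $|g|$ for each fixed $y$, and the outer $L^{\bar p}$-norm in $y$ is monotone as well. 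Multiplying by $\frac{1}{\psi(r)}r^{-d(\frac1{\bar q}+\frac1{\bar p})}$ and taking the supremum over $r>0$ gives
$$\|M_\gamma f\|_{(L^{\bar q},L^{\bar p})^{\psi}(\mathbb{R}^d)} \le C_d\,\|I_\gamma(|f|)\|_{(L^{\bar q},L^{\bar p})^{\psi}(\mathbb{R}^d)}.$$

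Next, apply Theorem \ref{theo 2} to $|f|$ instead of $f$. The hypotheses on $q,p,\gamma,\beta,\phi$ are exactly those of Theorem \ref{theo 2}. The norm $\|\cdot\|_{(L^{q},L^{p})^{\phi}(\mathbb{R}^d)}$ depends only on $|f|$, so $|f|\in (L^{q},L^{p})^{\phi}(\mathbb{R}^d)$ with the same norm. Theorem \ref{theo 2} therefore yields $\|I_\gamma(|f|)\|_{(L^{\bar q},L^{\bar p})^{\psi}}\lesssim \||f|\|_{(L^{q},L^{p})^{\phi}}=\|f\|_{(L^{q},L^{p})^{\phi}}$. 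Combining this with the previous display proves the claim.

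No step is a genuine obstacle. The only points to check are the lattice (monotonicity) property of the Fofana norm and the fact that $M_\gamma f$ is well defined for $f$ in the space. The latter holds because $f$ is locally in $L^q$ (since $\phi$ is positive and finite, $_{r}\|f\|_{q,p}<\infty$ for every $r$). Also, $I_\gamma(|f|)$ may a priori take the value $+\infty$, but Theorem \ref{theo 2}, whose proof yields a pointwise bound by $(M|f|)^{1+\gamma/\beta}$, ensures it is finite almost everywhere in the relevant sense. The comparison is then legitimate.
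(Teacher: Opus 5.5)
Your proposal is correct and follows the paper's route. The paper states the result as an immediate consequence of Theorem \ref{theo 2} and the pointwise bound (\ref{rel 11}); your explicit checks (monotonicity of $_{r}\|\cdot\|_{\bar q,\bar p}$ and applying Theorem \ref{theo 2} to $|f|$, which has the same norm as $f$) fill in the details it leaves implicit.
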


The next theorem gives a norm equivalence of the Riesz potential and its corresponding fractional maximal operator when we deal with non-negative
measurable functions. 
\begin{theorem}
\label{Theo 3.4}
Let $1\leq  q \leq p \leq \infty$, $0<\gamma< d$ and $\phi \in \mathcal{G}_{q,p}$. Assume that there exists a constant $ C>0 $ such that for any $ r>0 $,
\begin{equation}\label{eq 44}
 \displaystyle\int_{r}^{\infty} \phi(t)t^{d(\frac{1}{q}+\frac{1}{p})-1} dt \leq C   \phi(r)r^{d(\frac{1}{q}+\frac{1}{p})}.
\end{equation}
 Then, for all positive functions $ f \in L_{loc}^{q}(\mathbb{R}^{d})$, we have
$$ \Vert I_{\gamma}f \Vert_{(L^{q},L^{p})^{\phi}(\mathbb{R}^{d})}  \approx   \Vert M_{\gamma}f \Vert_{(L^{q},L^{p})^{\phi}(\mathbb{R}^{d})}. $$
\end{theorem}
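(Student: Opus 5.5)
One direction is immediate. By \eqref{rel 11}, $M_{\gamma} f \leq C_d I_{\gamma}f$ pointwise for positive $f$. Since $_{r}\Vert\cdot\Vert_{q,p}$ is monotone, this gives $\Vert M_{\gamma}f\Vert_{(L^{q},L^{p})^{\phi}} \lesssim \Vert I_{\gamma}f\Vert_{(L^{q},L^{p})^{\phi}}$. The work is entirely in the reverse inequality $\Vert I_{\gamma}f\Vert \lesssim \Vert M_{\gamma}f\Vert$. For this I would follow the classical local/global decomposition, in the spirit of the Adams--Welland-type comparison of Riesz potentials with fractional maximal functions in Morrey spaces.

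Fix $r>0$ and $y\in\mathbb{R}^{d}$, and split $f=f\chi_{B(y,2r)}+f\chi_{\mathbb{R}^{d}\setminus B(y,2r)}=f_1+f_2$.

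For the local part, I would use a good-$\lambda$ (Muckenhoupt--Wheeden) inequality. It says that for $0<q<\infty$, $\Vert I_{\gamma} g\Vert_{L^{q}} \lesssim \Vert M_{\gamma} g\Vert_{L^{q}}$ for nonnegative $g$ whenever the left side is finite. Applying it to $g=f_1$ gives
$$\Big(\int_{B(y,r)}|I_\gamma f_1|^q\Big)^{1/q}\lesssim \Vert M_\gamma f_1\Vert_{L^q}.$$
This does not immediately give $\Vert M_{\gamma}f\chi_{B(y,Cr)}\Vert_{L^{q}}$, because $M_\gamma f_1$ has a tail outside $B(y,4r)$. That tail behaves like $|x-y|^{\gamma-d}\int_{B(y,2r)}f$, which is controlled by $\big(\inf_{B(y,2r)}M_{\gamma}f\big)\,r^{d-\gamma}|x-y|^{\gamma-d}$. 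One can either accept it or bound it by a tail sum of the same shape as the global part below. Inside $B(y,4r)$ we simply have $M_{\gamma}f_1\leq M_{\gamma}f$.

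For the global part, take $x\in B(y,r)$. Then
$$I_{\gamma}f_2(x)\lesssim\sum_{i\geq1}(2^{i}r)^{\gamma-d}\int_{B(y,2^{i+1}r)}f\lesssim \sum_{i\ge1}\Big(\frac{1}{|B(y,2^{i+1}r)|}\int_{B(y,2^{i+1}r)}(M_\gamma f)^q\Big)^{1/q}.$$
The second inequality holds because $(2^{i}r)^{\gamma-d}\int_{B(y,2^{i+1}r)}f\lesssim M_{\gamma}f(z)$ for every $z\in B(y,2^{i+1}r)$, and one then averages over $z$. Taking the $L^{q}$ norm over $B(y,r)$ and then the $L^{p}$ norm in $y$ gives
$$_{r}\Vert I_\gamma f_2\chi\Vert_{q,p}\lesssim\sum_{i}2^{-id(1/q+1/p)}\ {}_{2^{i+1}r}\Vert M_\gamma f\Vert_{q,p}.$$
The averaging in $y$ over balls of radius $2^{i+1}r$ versus $r$ is handled by Proposition \ref{prop 3}(b), which converts the shift in radius into a factor $2^{id/p}$. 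Multiplying by $\phi(r)^{-1}r^{-d(1/q+1/p)}$ and bounding each $_{2^{i+1}r}\Vert M_\gamma f\Vert_{q,p}$ by $\phi(2^{i+1}r)(2^{i+1}r)^{d(1/q+1/p)}\Vert M_\gamma f\Vert$ yields
$$\Vert M_\gamma f\Vert\,\phi(r)^{-1}\sum_i\phi(2^{i+1}r)(2^ir)^{d(1/q+1/p)}r^{-d(1/q+1/p)}.$$
By \eqref{lem 2} this is at most $\phi(r)^{-1}r^{-d(1/q+1/p)}\int_r^\infty\phi(t)t^{d(1/q+1/p)-1}dt$, which is bounded by \eqref{eq 44}.

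The main obstacle is the local step. The good-$\lambda$ inequality needs a priori finiteness of $\Vert I_\gamma f_1\Vert_{L^{q}}$, which I would obtain by truncating $f$ and using monotone convergence. The case $q=1$ also needs care, but the good-$\lambda$ argument works for all $0<q<\infty$. Finally, the tail of $M_{\gamma}f_1$ must be absorbed, and I expect it to be dominated by the same dyadic sum used in the global part.
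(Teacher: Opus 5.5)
Your easy direction and your estimate for $f_2$ are fine. The latter is exactly how the paper treats its tail term: the pointwise bound $(2^ir)^{\gamma-d}\int_{B(y,2^{i+1}r)}f\lesssim\inf_{B(y,2^{i+1}r)}M_\gamma f$ is what the paper takes from \cite[Theorem 5.2]{GoM}.

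The gap is in the local step. The unweighted Muckenhoupt--Wheeden inequality on $\mathbb{R}^d$ does hold for every $0<q<\infty$, but it only gives information when $I_\gamma f_1\in L^q(\mathbb{R}^d)$. For $f_1=f\chi_{B(y,2r)}\neq 0$ this fails in a whole range of exponents. For $|x-y|\ge 4r$ one has $I_\gamma f_1(x)\gtrsim M_\gamma f_1(x)\gtrsim |x-y|^{\gamma-d}\int_{B(y,2r)}f$. Hence $\Vert I_\gamma f_1\Vert_{L^q}=\Vert M_\gamma f_1\Vert_{L^q}=\infty$ whenever $q(d-\gamma)\le d$, in particular for $q=1$ and every $\gamma$, a case the theorem includes. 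Truncating $f$ does not help, since this is a far-field effect of any nonzero compactly supported function. The ``tail of $M_\gamma f_1$'' you hope to absorb is therefore not a finite quantity, and the good-$\lambda$ step reads $\infty\le\infty$.

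Your argument does close for $q>d/(d-\gamma)$. There the tail contributes $\lesssim r^{d/q+\gamma-d}\int_{B(y,2r)}f\lesssim\Vert (M_\gamma f)\chi_{B(y,2r)}\Vert_q$. For the remaining range you need a genuinely \emph{localized} comparison. The paper gets it by citing \cite[Lemma 4.2]{GoM}:
\[
\Vert (I_\gamma f)\chi_{B(y,r)}\Vert_q\approx\Vert (M_\gamma f)\chi_{B(y,r)}\Vert_q+|B(y,r)|^{1/q}\int_{\mathbb{R}^d\setminus B(y,r)}\frac{f(x)}{|x-y|^{d-\gamma}}\,dx .
\]
This holds for all $q$, and its second term is then handled exactly like your $f_2$-part. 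Within your framework you could instead use the \emph{weighted} Muckenhoupt--Wheeden inequality with the $A_1$ weight $w(x)=(1+|x-y|/r)^{-N}$, where $\max\{0,d-q(d-\gamma)\}<N<d$. Then $w\approx 1$ on $B(y,r)$, both sides are finite, and the far tail of $M_\gamma f_1$ again costs only $\Vert (M_\gamma f)\chi_{B(y,2r)}\Vert_q$. For $q=1$ a direct Fubini computation also suffices.

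A smaller bookkeeping point concerns the global part. The coefficient there is $2^{-(i+1)d/q}$, coming from $|B(y,r)|^{1/q}/|B(y,2^{i+1}r)|^{1/q}$, not $2^{-id(1/q+1/p)}$. The $L^p_y$-norm of $\Vert (M_\gamma f)\chi_{B(y,2^{i+1}r)}\Vert_q$ is by definition ${}_{2^{i+1}r}\Vert M_\gamma f\Vert_{q,p}$, so Proposition \ref{prop 3} yields no extra $2^{-id/p}$. Since you then discard the factor anyway, as the paper does, before invoking \eqref{lem 2} and \eqref{eq 44}, your conclusion is unaffected.

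Be aware, though, that $t\mapsto\phi(t)t^{d/q}$ is almost increasing. So the integrand in \eqref{eq 44} is $\gtrsim\phi(r)r^{d/q}t^{d/p-1}$ on $(r,\infty)$, and \eqref{eq 44} can never hold for $\phi\in\mathcal{G}_{q,p}$. Keeping the factor $2^{-(i+1)d/q}$ instead leads to the meaningful condition $\int_r^\infty\phi(t)t^{d/p-1}\,dt\lesssim\phi(r)r^{d/p}$.
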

\begin{proof}
Suppose $1\leq  q \leq p \leq \infty$ and $0<\gamma< d$. Let $\phi \in \mathcal{G}_{q,p}$ satisfying \eqref{eq 44}. In view of inequality \eqref{rel 11} it suffices to prove that $$\Vert I_{\gamma}f \Vert_{(L^{q},L^{p})^{\phi}(\mathbb{R}^{d})}  \lesssim   \Vert M_{\gamma}f \Vert_{(L^{q},L^{p})^{\phi}(\mathbb{R}^{d})}.$$
Let $r>0$ and let $y \in \mathbb{R}^{d}$. By Lemma 4.2 of \cite{GoM}, we have
\begin{equation}\label{rel 16}
 \Vert (I_{\gamma}f) \chi_{B(y,r)} \Vert_{q} \approx \Vert (M_{\gamma}f) \chi_{B(y,r)} \Vert_{q} + \vert B(y,r) \vert^{\frac{1}{q}} \int_{\mathbb{R}^{d}\setminus B(y,r)}\frac{f(x)}{\left|x-y\right|^{d-\gamma}}dx.
\end{equation}
Furthermore,
\begin{eqnarray*}
& & \vert B(y,r) \vert^{\frac{1}{q}} \int_{\mathbb{R}^{d}\setminus B(y,r)}\frac{f(x)}{\left|x-y\right|^{d-\gamma}}dx \\
 &=&  \sum\limits^{\infty}_{i=0} \vert B(y,r) \vert^{\frac{1}{q}}  \displaystyle\int_{ B(y,\, 2^{i+1}r)\setminus B(y,\, 2^{i}r) } \frac{f(x)}{\left|x-y\right|^{d-\gamma}} dx\\ &\lesssim&  \sum\limits^{\infty}_{i=0} (2^{i})^{-\frac{d}{q}}\vert B(y,r) \vert^{\frac{\gamma}{d}-(1-\frac{1}{q})}  \Vert f \chi_{B(y,2^{i+1}r)} \Vert_{1}.
\end{eqnarray*}
Then, thanks to \cite[Theorem 5.2]{GoM}, we get
\begin{eqnarray*}
\vert B(y,r) \vert^{\frac{1}{q}} \int_{\mathbb{R}^{d}\setminus B(y,r)}\frac{f(x)}{\left|x-y\right|^{d-\gamma}}dx \lesssim  \sum\limits^{\infty}_{i=0} (2^{i})^{-\frac{d}{q}}  \Vert (M_{\gamma}f) \chi_{B(y,2^{i+1}r)} \Vert_{q}.
\end{eqnarray*}
Taking into
account this inequality, the $ L^{p}$-norm of both sides of \eqref{rel 16}, with respect to $y$, leads to
$$_{r}\Vert I_{\gamma}f \Vert_{q,p} \lesssim \: _{r}\Vert M_{\gamma}f \Vert_{q,p} +  \sum\limits^{\infty}_{i=0} (2^{i})^{-\frac{d}{q}}  \: _{2^{i+1}r}\Vert M_{\gamma}f \Vert_{q,p}.  $$
We have, on the one hand,
\begin{eqnarray} \label{eq 5}
_{r}\Vert M_{\gamma}f \Vert_{q,p} \leq \phi(r)r^{d(\frac{1}{q}+\frac{1}{p})}\Vert M_{\gamma}f \Vert_{(L^{q},L^{p})^{\phi}(\mathbb{R}^{d})}.
\end{eqnarray}
On the other hand,
\begin{eqnarray*}
& &\sum\limits^{\infty}_{i=0} (2^{i})^{-\frac{d}{q}}  \: _{2^{i+1}r}\Vert M_{\gamma}f \Vert_{q,p}\\
&\leq& \left\Vert M_{\gamma}f \right\Vert_{(L^{q},L^{p})^{\phi }(\mathbb{R}^{d})} \sum^{\infty}_{i=0} \phi(2^{i+1}r)(2^{i+1}r)^{d(\frac{1}{q}+\frac{1}{p})}.
\end{eqnarray*}
It follows from \eqref{lem 2} that
\begin{eqnarray*}
& &\sum\limits^{\infty}_{i=0} (2^{i})^{-\frac{d}{q}}  \: _{2^{i+1}r}\Vert M_{\gamma}f \Vert_{q,p}\\ 
&\lsim & \left\Vert M_{\gamma}f \right\Vert_{(L^{q},L^{p})^{\phi }(\mathbb{R}^{d})} \sum^{\infty}_{i=0} \displaystyle\int_{2^{i}r}^{2^{i+1}r} \phi(t)t^{d(\frac{1}{q}+\frac{1}{p})-1} dt. 
\end{eqnarray*}
Hence,
$$\sum\limits^{\infty}_{i=0} (2^{i})^{-\frac{d}{q}}  \: _{2^{i+1}r}\Vert M_{\gamma}f \Vert_{q,p}  \lsim  \left\Vert M_{\gamma}f \right\Vert_{(L^{q},L^{p})^{\phi }(\mathbb{R}^{d})} \displaystyle\int_{r}^{\infty} \phi(t)t^{d(\frac{1}{q}+\frac{1}{p})-1} dt.  $$
Since $\phi$ satisfies \eqref{eq 44}, we get 
\begin{equation}\label{eq 6}
\sum\limits^{\infty}_{i=0} (2^{i})^{-\frac{d}{q}}  \: _{2^{i+1}r}\Vert M_{\gamma}f \Vert_{q,p} \lsim  \left\Vert M_{\gamma}f \right\Vert_{(L^{q},L^{p})^{\phi }(\mathbb{R}^{d})}\phi(r)r^{d(\frac{1}{q}+\frac{1}{p})}.
\end{equation}
From \eqref{eq 5} and \eqref{eq 6}, we deduce that 
\begin{equation*} 
\label{eq 7} 
\frac{1}{\phi(r)}r^{d(-\frac{1}{q}-\frac{1}{p})} \: _{r}\Vert I_{\gamma}f \Vert_{q,p} \lsim \left\Vert M_{\gamma}f \right\Vert_{(L^{q},L^{p})^{\phi }(\mathbb{R}^{d})}.
\end{equation*}
We obtain the desired result by taking the supremum over all $ r > 0 $ in
the left-hand side of this last inequality.
\end{proof}
Note that Theorem \ref{Theo 3.4} generalizes Theorem 4.7 in \cite{JF} and Theorem 7.1 in \cite{GoM}.

\section{Boundedness of generalized fractional integral operators}
\label{S4}
In this section, under different appropriate conditions on $\phi$ and $\rho$, we establish two results presenting the boundedness of the generalized fractional integral operator $T_{\rho}$ from the generalized Fofana space $(L^{q},L^{p})^{\phi}(\mathbb{R}^{d})$ to another one. 
\begin{theorem}\label{theo 3}
Suppose that $1 < q\leq p \leq \infty$, $ 0 < \gamma < \frac{d}{q}$ and $-\frac{d}{q} \leq \beta  < -\gamma $. Let $\phi\in \mathcal{G}_{q,p}$ satisfying condition (\ref{eqq 2}) and let $\rho : \: (0,\,\infty) \:  \to \: (0,\,\infty) \:$ satisfying condition (\ref{eq 1}). Assume also that 
$\phi(r) \lsim  r^{\beta}$ and $\rho(r) \lsim  r^{\gamma}$ for every $r > 0$. \\ Put $$\frac{1}{\bar{p}} = \frac{1}{p}\left(1 +\frac{\gamma}{\beta}\right) \quad \text{ and } \quad \frac{1}{\bar{q}} = \frac{1}{q}\left(1 +\frac{\gamma}{\beta}\right). $$
Then, for all  $f \in (L^{q},L^{p})^{\phi}(\mathbb{R}^{d})$ we have
 $$\left\Vert T_{\rho}f \right\Vert_{(L^{\bar{q}},L^{\bar{p}})^{\psi}(\mathbb{R}^{d})} \lsim  \left\Vert f  \right\Vert_{(L^{q},L^{p})^{\phi}(\mathbb{R}^{d})},$$ where  $\psi(r)= (\phi(r))^{\frac{q}{\bar{q}}}$.
\end{theorem}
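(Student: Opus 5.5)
The plan is to mimic the proof of Theorem \ref{theo 2}, replacing the kernel $|x-y|^{\gamma-d}$ by $\rho(|x-y|)|x-y|^{-d}$. The aim is a Hedberg-type pointwise bound
$$|T_\rho f(x)|\lesssim (Mf(x))^{1+\frac{\gamma}{\beta}}\,\Vert f\Vert^{-\frac{\gamma}{\beta}}_{(L^{q},L^{\infty})^{\phi}(\mathbb{R}^{d})},$$
after which the rest of the argument of Theorem \ref{theo 2} goes through unchanged. First I take $f\neq 0$ in $(L^{q},L^{p})^{\phi}(\mathbb{R}^{d})$, fix $r>0$ and $x$, and split
$$T_\rho f(x)=\int_{B(x,r)}\frac{\rho(|x-y|)}{|x-y|^{d}}f(y)\,dy+\int_{\mathbb{R}^d\setminus B(x,r)}\frac{\rho(|x-y|)}{|x-y|^{d}}f(y)\,dy=A(x)+B(x).$$

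For the local term I decompose $B(x,r)$ into the annuli $B(x,2^{-j}r)\setminus B(x,2^{-j-1}r)$, $j\ge 0$. On each annulus I use $\rho(|x-y|)\lesssim (2^{-j}r)^{\gamma}$, which comes from the hypothesis $\rho(t)\lesssim t^\gamma$; the doubling condition \eqref{eq 1} on $\rho$ would also allow replacing $\rho(|x-y|)$ by $\rho(2^{-j}r)$ if needed. This gives
$$|A(x)|\lesssim\sum_{j\ge0}(2^{-j}r)^{\gamma-d}\int_{B(x,2^{-j}r)}|f|\lesssim \sum_{j\ge 0}(2^{-j}r)^{\gamma}Mf(x)\lesssim r^\gamma Mf(x),$$
and the geometric series converges because $\gamma>0$.

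For the global term I use the same dyadic annuli outward, the bound $\rho(2^{i}r\text{-scale})\lesssim (2^{i}r)^{\gamma}$, and H\"older's inequality on $B(x,2^{i+1}r)$, exactly as in the estimate of $B(x)$ in Theorem \ref{theo 2}. Together with $\phi(t)\lesssim t^\beta$ this gives
$$|B(x)|\lesssim \Vert f\Vert_{(L^{q},L^{\infty})^{\phi}(\mathbb{R}^{d})}\sum_i (2^ir)^{\gamma+\beta}\lesssim r^{\beta+\gamma}\Vert f\Vert_{(L^{q},L^{\infty})^{\phi}(\mathbb{R}^{d})},$$
which converges since $\beta+\gamma<0$.

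Next I optimize over $r$ by choosing $r=(\Vert f\Vert_{(L^{q},L^{\infty})^{\phi}(\mathbb{R}^{d})}/Mf(x))^{1/\beta}$, which yields the pointwise bound above. I then take the ${}_r\Vert\cdot\Vert_{\bar q,\bar p}$ quasi-norms. The identities $\bar q(1+\gamma/\beta)=q$ and $\bar p(1+\gamma/\beta)=p$ give ${}_r\Vert T_\rho f\Vert_{\bar q,\bar p}\lesssim \Vert f\Vert^{-\gamma/\beta}_{(L^{q},L^{\infty})^{\phi}(\mathbb{R}^{d})}\,({}_r\Vert Mf\Vert_{q,p})^{1+\gamma/\beta}$. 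Since $\psi(r)r^{d(1/\bar q+1/\bar p)}=(\phi(r)r^{d(1/q+1/p)})^{1+\gamma/\beta}$, multiplying by the appropriate normalizing factor and taking the supremum in $r$ bounds the $(L^{\bar q},L^{\bar p})^{\psi}$ norm. I finish by applying Theorem \ref{theo 1} to $Mf$ and Proposition \ref{prop 1} to pass from the $(L^q,L^\infty)^\phi$ norm to the $(L^q,L^p)^\phi$ norm.

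The main obstacle is the local term $A$. There the hypothesis $\rho(t)\lesssim t^\gamma$ must be used at small scales, and the dyadic summation needs $\gamma>0$; this is where \eqref{eq 1} on $\rho$ would be invoked if one wanted a bound in terms of $\int_0^r\rho(t)\,dt/t$ instead. Everything else is a verbatim transcription of the proof of Theorem \ref{theo 2}.
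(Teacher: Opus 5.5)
Your proposal is correct and is essentially the paper's proof: the same splitting at radius $r$, the same bounds $r^{\gamma}Mf(x)$ and $r^{\beta+\gamma}\Vert f\Vert_{(L^{q},L^{\infty})^{\phi}(\mathbb{R}^{d})}$, then the optimization in $r$, Theorem~\ref{theo 1} and Proposition~\ref{prop 1}; you use $\rho(t)\lesssim t^{\gamma}$ directly on each annulus instead of the doubling of $\rho$, which is harmless. One slip, which also appears in the paper's proof of Theorem~\ref{theo 2}: you must take $r=\bigl(Mf(x)/\Vert f\Vert_{(L^{q},L^{\infty})^{\phi}(\mathbb{R}^{d})}\bigr)^{1/\beta}$, not its reciprocal, because only then do both $r^{\gamma}Mf(x)$ and $r^{\beta+\gamma}\Vert f\Vert_{(L^{q},L^{\infty})^{\phi}(\mathbb{R}^{d})}$ equal $(Mf(x))^{1+\gamma/\beta}\,\Vert f\Vert^{-\gamma/\beta}_{(L^{q},L^{\infty})^{\phi}(\mathbb{R}^{d})}$.
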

\begin{remark}
Note that Theorem \ref{theo 3} in the case $p=\infty$ was proved in \cite[Theorem 2.3]{GE}. 
\end{remark}
\begin{proof}[Proof of Theorem \ref{theo 3}]
Let $1 < q\leq p < \infty$. Let $\phi\in \mathcal{G}_{q,p}$ satisfying condition (\ref{eqq 2}) and $\rho : \: (0,\,\infty) \:  \to \: (0,\,\infty) \:$ satisfying condition (\ref{eq 1}). We suppose that $f \in (L^{q},L^{p})^{\phi}(\mathbb{R}^{d})\setminus \{ 0 \}$ since the case $f=0$ is obvious. Let $r>0$ and let $x \in \mathbb{R}^{d}$. We have
\begin{eqnarray*}
T_{\rho} f(x)&=&\displaystyle\int_{B(x,\, r)}\frac{\rho(\left|x-y\right|)}{\left|x-y\right|^{d}}f(y)dy+\displaystyle\int_{\mathbb{R}^{d}\setminus B(x,\, r)}\frac{\rho(\left|x-y\right|)}{\left|x-y\right|^{d}}f(y)dy \\ &=& I_{1}(x) + I_{2}(x) . 
\end{eqnarray*}
Following the proof of Theorem B in \cite{GE0}, we may write
\begin{eqnarray*} 
 \vert I_{1}(x) \vert & \leq &  C M f(x) \sum\limits^{-1}_{i=-\infty}  \rho(2^{i}r).
\end{eqnarray*}
It follows that $$\vert I_{1}(x) \vert  \leq  C M f(x) \sum\limits^{-1}_{i=-\infty}  (2^{i}r)^{\gamma}. $$ 
Hence, \begin{eqnarray} \label{rel 10}
\vert I_{1}(x) \vert  \leq C   M f(x)r^{\gamma}.
\end{eqnarray}
Furthermore, we have
\begin{eqnarray*}
& &\vert I_{2}(x) \vert  \\
& \leq & C\sum\limits^{\infty}_{i=0}\frac{\rho(2^{i}r)}{(2^{i}r)^{d}} \vert B(x,2^{i+1}r) \vert^{\frac{d}{q'}} \left( \displaystyle\int_{B(x,2^{i+1}r)} \vert f(y)\vert^{q} dy\right)^{\frac{1}{q}}\\
 &\leq & C \sum\limits^{\infty}_{i=0}\frac{\rho(2^{i}r)}{(2^{i}r)^{\frac{d}{q}}} (2^{i+1}r)^{\frac{d}{q}} \phi(2^{i+1}r)\frac{1}{\phi(2^{i+1}r)}(2^{i+1}r)^{-\frac{d}{q}} \Vert f \chi_{B(x,2^{i+1}r)}   \Vert_{q} \\
 &\leq & C \Vert f \Vert_{(L^{q},L^{\infty})^{\phi}(\mathbb{R}^{d})}\sum\limits^{\infty}_{i=0} \rho(2^{i}r) \phi(2^{i}r)\\&\leq & C \Vert f \Vert_{(L^{q},L^{\infty})^{\phi}(\mathbb{R}^{d})}\sum\limits^{\infty}_{i=0} (2^{i}r)^{\gamma} (2^{i}r)^{\beta}.
\end{eqnarray*}
Hence,
\begin{eqnarray}\label{rel 14}
\vert I_{2}(x) \vert \leq C r^{\beta +\gamma}\Vert f \Vert_{(L^{q},L^{\infty})^{\phi}(\mathbb{R}^{d})}.
\end{eqnarray}
It follows from \eqref{rel 10} and \eqref{rel 14} that
\begin{eqnarray*}
\vert T_{\rho} f(x) \vert &\leq& C\left(r^{\gamma} M f(x)+r^{\beta +\gamma}\Vert f \Vert_{(L^{q},L^{\infty})^{\phi}(\mathbb{R}^{d})}\right)
\end{eqnarray*}
for all $ r > 0 $. \\ 
We end the proof as we did in that of Theorem \ref{theo 2}.
\end{proof}
A further statement of the boundedness of the generalized fractional integral operator $T_{\rho}$ in generalized Fofana spaces reads as follows.
\begin{theorem}\label{theo 4}
Suppose that $1 < q\leq p < \infty$. Let $\phi$ be a surjective function belonging to $ \mathcal{G}_{q,p}$ and satisfying condition (\ref{eqq 2}). Let  $\rho : \: (0,\, \infty) \:  \to \: (0, \,\infty) \:$ satisfying condition (\ref{eq 1}).  
 Put $\frac{q}{\bar{q}} = \frac{p}{\bar{p}}<1$.
Assume that for any $r > 0$, the following inequalities hold:
\begin{equation}\label{eq 8}
\displaystyle\int_{0}^{r} \frac{\rho(t)}{t} dt \lsim  \phi(r)^{\frac{q}{\bar{q}}-1}
\end{equation}
and 
\begin{equation}\label{eq 9}
\displaystyle\int_{r}^{\infty} \frac{\rho(t)\phi(t)}{t} dt \lsim  \phi(r)^{\frac{q}{\bar{q}}}.
\end{equation}
 Then, for all  $f \in (L^{q},L^{p})^{\phi}(\mathbb{R}^{d})$ we have
 $$\left\Vert T_{\rho}f \right\Vert_{(L^{\bar{q}},L^{\bar{p}})^{\psi}(\mathbb{R}^{d})} \lsim  \left\Vert f  \right\Vert_{(L^{q},L^{p})^{\phi}(\mathbb{R}^{d})},$$ where  $\psi(r)= (\phi(r))^{\frac{q}{\bar{q}}}$.
\end{theorem}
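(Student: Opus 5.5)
Fix $f \in (L^{q},L^{p})^{\phi}(\mathbb{R}^{d})\setminus\{0\}$, $x\in\mathbb{R}^{d}$ and $r>0$, and split as in the proof of Theorem \ref{theo 3}:
$$T_{\rho}f(x)=I_{1}(x)+I_{2}(x),$$
with $I_{1}$ the integral over $B(x,r)$. The plan is a Hedberg-type pointwise estimate adapted to the general function $\phi$. We then choose $r$ through the surjectivity of $\phi$ rather than through a power law.

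\textbf{Local and global pieces.} For the local part, decompose $B(x,r)$ into dyadic annuli and use the doubling property (\ref{eq 1}) of $\rho$, as in Theorem B of \cite{GE0}. This gives
$$|I_{1}(x)|\lesssim Mf(x)\sum_{i\le -1}\rho(2^{i}r)\lesssim Mf(x)\int_{0}^{r}\frac{\rho(t)}{t}\,dt\lesssim Mf(x)\,\phi(r)^{\frac{q}{\bar q}-1},$$
where the last step is (\ref{eq 8}). For the global part, argue exactly as in the estimate of $I_{2}$ in Theorem \ref{theo 3}, using Hölder's inequality on each annulus:
$$|I_{2}(x)|\lesssim \|f\|_{(L^{q},L^{\infty})^{\phi}}\sum_{i\ge0}\rho(2^{i}r)\phi(2^{i}r).$$
Doubling of $\rho$ and of $\phi$ lets us compare each term with $\int_{2^{i}r}^{2^{i+1}r}\rho(t)\phi(t)\,dt/t$. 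Summing and applying (\ref{eq 9}) then gives $|I_{2}(x)|\lesssim \|f\|_{(L^{q},L^{\infty})^{\phi}}\,\phi(r)^{\frac{q}{\bar q}}$.

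\textbf{Choice of $r$.} We now have
$$|T_{\rho}f(x)|\lesssim \phi(r)^{\frac{q}{\bar q}-1}Mf(x)+\phi(r)^{\frac{q}{\bar q}}\|f\|_{(L^{q},L^{\infty})^{\phi}} \quad\text{for every } r>0.$$
If $0<Mf(x)<\infty$, surjectivity of $\phi$ gives $r$ with $\phi(r)=Mf(x)/\|f\|_{(L^{q},L^{\infty})^{\phi}}$. With this $r$ both terms coincide, and
$$|T_{\rho}f(x)|\lesssim (Mf(x))^{\frac{q}{\bar q}}\,\|f\|_{(L^{q},L^{\infty})^{\phi}}^{1-\frac{q}{\bar q}}.$$
If $Mf(x)=0$ then $f=0$ a.e., and $Mf<\infty$ a.e. by Theorem \ref{theo 1}, so these cases are harmless.

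\textbf{Passing to the norm.} Set $\theta=q/\bar q=p/\bar p<1$. Then
$${}_{r}\|T_{\rho}f\|_{\bar q,\bar p}\lesssim \|f\|^{1-\theta}_{(L^{q},L^{\infty})^{\phi}}\,\big({}_{r}\|Mf\|_{q,p}\big)^{\theta},$$
because $\| |g|^{\theta}\chi_B\|_{\bar q}=\|g\chi_B\|_{q}^{\theta}$ and similarly for the outer $L^{\bar p}$ norm. We also have $r^{-d(1/\bar q+1/\bar p)}=\big(r^{-d(1/q+1/p)}\big)^{\theta}$ and $\psi=\phi^{\theta}$. Dividing by $\psi(r)r^{d(1/\bar q+1/\bar p)}$ and taking the supremum over $r>0$ therefore yields
$$\|T_{\rho}f\|_{(L^{\bar q},L^{\bar p})^{\psi}}\lesssim\|f\|^{1-\theta}_{(L^{q},L^{\infty})^{\phi}}\,\|Mf\|^{\theta}_{(L^{q},L^{p})^{\phi}}.$$
We finish with Theorem \ref{theo 1} for $M$ and Proposition \ref{prop 1} to bound $\|f\|_{(L^{q},L^{\infty})^{\phi}}$ by $\|f\|_{(L^{q},L^{p})^{\phi}}$.

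\textbf{Main obstacle.} The delicate step is the discretization in the global estimate, namely replacing $\sum_{i}\rho(2^{i}r)\phi(2^{i}r)$ by the integral in (\ref{eq 9}), together with the analogous step for $I_{1}$ and (\ref{eq 8}). This requires the doubling constants of $\rho$ and $\phi$ from (\ref{eq 1}) on each dyadic interval $[2^{i}r,2^{i+1}r]$. The $\phi$ in the Hölder step appears at $2^{i+1}r$ and must be shifted to $2^{i}r$, which (\ref{eq 1}) also allows.
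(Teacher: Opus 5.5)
Your proposal is correct and follows the paper's proof essentially step for step. It uses the same split into $I_1$ and $I_2$, the same local bound $|I_1(x)|\lesssim Mf(x)\,\phi(r)^{\frac{q}{\bar q}-1}$ via \eqref{eq 8}, the same Hölder-on-annuli estimate for $I_2$ summed into $\int_r^\infty \rho(t)\phi(t)\,dt/t$ and bounded by \eqref{eq 9}, the same choice $\phi(r)=Mf(x)/\|f\|_{(L^{q},L^{\infty})^{\phi}}$ via surjectivity, and the same conclusion through Theorem~\ref{theo 1} and Proposition~\ref{prop 1}. The only differences are cosmetic: you index the annulus sum at $2^{i}r$ rather than $2^{i+1}r$, and you explicitly handle the points where $Mf(x)\in\{0,\infty\}$ and spell out the exponent bookkeeping, both of which the paper leaves implicit.
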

\begin{proof}
Suppose $1 < q\leq p < \infty$. Let $\phi$ be a surjective function belonging to $ \mathcal{G}_{q,p}$ and satisfying condition (\ref{eqq 2}). Let $\rho : \: (0,\, \infty) \:  \to \: (0,\, \infty) \:$ satisfying condition (\ref{eq 1}).  Let $f \in (L^{q},L^{p})^{\phi}(\mathbb{R}^{d})\setminus \{ 0 \}$ since the case $f=0$ is obvious. Let $r>0$ and $x \in \mathbb{R}^{d}$. We have
\begin{eqnarray*}
 T_{\rho} f(x)&=&\displaystyle\int_{B(x,\, r)}\frac{\rho(\left|x-y\right|)}{\left|x-y\right|^{d}}f(y)dy+\displaystyle\int_{\mathbb{R}^{d}\setminus B(x,\, r)}\frac{\rho(\left|x-y\right|)}{\left|x-y\right|^{d}}f(y)dy\\ &=& I_{1}(x) + I_{2}(x) . 
\end{eqnarray*}
Following the proof of Theorem B in \cite{GE0}, we have
\begin{eqnarray}  \label{rel 15}
 \vert I_{1}(x) \vert & \leq &  C M f(x) \phi(r)^{\frac{q}{\bar{q}}-1}.
\end{eqnarray}
Furthermore, we have
\begin{eqnarray*}
& &\vert I_{2}(x) \vert  \\
& \leq & C\sum\limits^{\infty}_{i=0}\frac{\rho(2^{i+1}r)}{(2^{i}r)^{d}} \vert B(x,2^{i+1}r) \vert^{\frac{d}{q^{'}}} \left( \displaystyle\int_{B(x,2^{i+1}r)} \vert f(y)\vert^{q} dy\right)^{\frac{1}{q}}\\
 &\leq & C \sum\limits^{\infty}_{i=0}\frac{\rho(2^{i+1}r)}{(2^{i}r)^{\frac{d}{q}}} (2^{i+1}r)^{\frac{d}{q}} \phi(2^{i+1}r)\frac{1}{\phi(2^{i+1}r)}(2^{i+1}r)^{-\frac{d}{q}} \Vert f \chi_{B(x,2^{i+1}r)}   \Vert_{q} \\
 &\leq & C \Vert f \Vert_{(L^{q},L^{\infty})^{\phi}(\mathbb{R}^{d})}\sum\limits^{\infty}_{i=0} \rho(2^{i+1}r) \phi(2^{i+1}r)\\&\leq & C \Vert f \Vert_{(L^{q},L^{\infty})^{\phi}(\mathbb{R}^{d})}\sum\limits^{\infty}_{i=0} \displaystyle\int_{2^{i}r}^{2^{i+1}r} \frac{\rho(t)\phi(t)}{t} dt \\&\leq & C \Vert f \Vert_{(L^{q},L^{\infty})^{\phi}(\mathbb{R}^{d})}\displaystyle\int_{r}^{+\infty} \frac{\rho(t)\phi(t)}{t} dt.
\end{eqnarray*}
Hence,
\begin{eqnarray}\label{rel 17}
\vert I_{2}(x) \vert \leq C \Vert f \Vert_{(L^{q},L^{\infty})^{\phi}(\mathbb{R}^{d})}\phi(r)^{\frac{q}{\bar{q}}}.
\end{eqnarray}
It follows from \eqref{rel 15} and \eqref{rel 17} that
\begin{eqnarray*}
\vert T_{\rho} f(x) \vert &\leq& C\left(M f(x) \phi(r)^{\frac{q}{\bar{q}}-1}+\Vert f \Vert_{(L^{q},L^{\infty})^{\phi}(\mathbb{R}^{d})}\phi(r)^{\frac{q}{\bar{q}}}\right)
\end{eqnarray*}
for all $ r > 0 $. \\ 
Since $ \phi $ is surjective, we can choose  $ r > 0 $ such that $$ \phi(r)= \frac{M f(x)}{\Vert f \Vert_{(L^{q},L^{\infty})^{\phi}(\mathbb{R}^{d})}}. $$
Thus, for every $ x \in \mathbb{R}^{d} $ we have  
\begin{eqnarray*} \label{rel 55}
\vert T_{\rho} f(x) \vert \leq C (M f(x))^{\frac{q}{\bar{q}}}\Vert f \Vert^{1-\frac{q}{\bar{q}}}_{(L^{q},L^{\infty})^{\phi}(\mathbb{R}^{d})}.
\end{eqnarray*}
It follows that 
\begin{eqnarray*}
_{r}\Vert T_{\rho} f \Vert_{\bar{q},\bar{p}} &\leq& C\Vert f \Vert^{1-\frac{q}{\bar{q}}}_{(L^{q},L^{\infty})^{\phi}(\mathbb{R}^{d})}\:_{r}\Vert M f \Vert^{\frac{q}{\bar{q}}}_{\bar{q}\frac{q}{\bar{q}},\bar{p}\frac{p}{\bar{p}}}\\
&\leq& C\Vert f \Vert^{1-\frac{q}{\bar{q}}}_{(L^{q},L^{\infty})^{\phi}(\mathbb{R}^{d})}\:_{r}\Vert M f \Vert^{\frac{q}{\bar{q}}}_{q,p}.
\end{eqnarray*}
Multiplying both sides of the previous inequality by $ \frac{1}{\psi(r)} r^{d(-\frac{1}{\bar{q}}-\frac{1}{\bar{p}})} $, where  $\psi(r)= (\phi(r))^{\frac{q}{\bar{q}}}$, we get 
\begin{eqnarray*}
&&\frac{1}{\psi(r)} r^{d(-\frac{1}{\bar{q}}-\frac{1}{\bar{p}})} \:_{r}\Vert T_{\rho} f \Vert_{\bar{q},\bar{p}} \\  &\leq& C\Vert f \Vert^{1-\frac{q}{\bar{q}}}_{(L^{q},L^{\infty})^{\phi}(\mathbb{R}^{d})}   \left(  \frac{1}{\phi(r)} r^{d(-\frac{1}{q}-\frac{1}{p})}\:_{r}\Vert M f \Vert_{q,p}  \right)^{\frac{q}{\bar{q}}}.
\end{eqnarray*}
Hence,  $$\left\Vert T_{\rho}f \right\Vert_{(L^{\bar{q}},L^{\bar{p}})^{\psi}(\mathbb{R}^{d})} \leq C\Vert f \Vert^{1-\frac{q}{\bar{q}}}_{(L^{q},L^{\infty})^{\phi}(\mathbb{R}^{d})} \left\Vert Mf  \right\Vert^{\frac{q}{\bar{q}}}_{(L^{q},L^{p})^{\phi}(\mathbb{R}^{d})}$$
By Theorem \ref{theo 1}, we get 
\begin{eqnarray*}\label{rel 18}
\left\Vert T_{\rho}f \right\Vert_{(L^{\bar{q}},L^{\bar{p}})^{\psi}(\mathbb{R}^{d})} \leq C\Vert f \Vert^{1-\frac{q}{\bar{q}}}_{(L^{q},L^{\infty})^{\phi}(\mathbb{R}^{d})}  \left\Vert f  \right\Vert^{\frac{q}{\bar{q}}}_{(L^{q},L^{p})^{\phi}(\mathbb{R}^{d})}.
\end{eqnarray*}
We end the proof by applying Proposition \ref{prop 1}.
\end{proof}
The analogue of Theorem \ref{theo 4} within the framework of generalized Morrey spaces was proved in \cite[Theorem B]{GE0}. 
\section{Olsen-type inequalities in generalized Fofana spaces}
\label{S5}
As a consequence of Proposition \ref{prop 2} and Theorem \ref{theo 2}, we have the following Olsen-type inequality.
\begin{theorem}\label{theo 5}
Assume that $1 < q\leq p \leq \infty$, $ 0 < \gamma < \frac{d}{q}$ and $-\frac{d}{q} \leq \beta  < -\gamma $. Let $\phi\in \mathcal{G}_{q,p}$ satisfying condition (\ref{eqq 2}) and $\phi(r) \lsim  r^{\beta}$ for every $r > 0$. Put $s=-\frac{\beta q}{\gamma}$ and $l=-\frac{\beta p}{\gamma}$. If $W \in (L^{s},L^{l})^{\phi^{\frac{q}{s}}}(\mathbb{R}^{d})$, 
 then for all  $f \in (L^{q},L^{p})^{\phi}(\mathbb{R}^{d})$ we have
 $$\left\Vert W \cdot I_{\gamma}f \right\Vert_{(L^{q},L^{p})^{\phi}(\mathbb{R}^{d})} \lsim \left\Vert W  \right\Vert_{(L^{s},L^{l})^{\phi^{\frac{q}{s}}}(\mathbb{R}^{d})} \left\Vert f  \right\Vert_{(L^{q},L^{p})^{\phi}(\mathbb{R}^{d})}.$$ 
\end{theorem}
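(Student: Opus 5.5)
The estimate will follow by combining Theorem \ref{theo 2} with the H\"older-type inequality of Proposition \ref{prop 2}, applied with $W$ as the first factor and $I_{\gamma}f$ as the second factor.

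First, apply Theorem \ref{theo 2} to $f$, with exponents
\[
\frac{1}{\bar q}=\frac{1}{q}\Bigl(1+\frac{\gamma}{\beta}\Bigr),\qquad
\frac{1}{\bar p}=\frac{1}{p}\Bigl(1+\frac{\gamma}{\beta}\Bigr),
\]
and weight $\psi=\phi^{q/\bar q}$. This gives $\Vert I_\gamma f\Vert_{(L^{\bar q},L^{\bar p})^{\psi}}\lsim\Vert f\Vert_{(L^{q},L^{p})^{\phi}}$.

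Next, we check that the exponents are compatible. Since $s=-\beta q/\gamma$, we have $\frac1s=-\frac{\gamma}{\beta q}$, so
\[
\frac1s+\frac1{\bar q}=\frac1q\Bigl(-\frac{\gamma}{\beta}+1+\frac{\gamma}{\beta}\Bigr)=\frac1q\le 1 .
\]
In the same way, $\frac1l+\frac1{\bar p}=\frac1p$. For the weights, note that $\frac{q}{s}=-\frac{\gamma}{\beta}$ and $\frac{q}{\bar q}=1+\frac{\gamma}{\beta}$. Hence
\[
\phi^{q/s}\,\phi^{q/\bar q}=\phi^{1}=\phi ,
\]
so the condition $\phi_1\phi_2\le\phi$ of Proposition \ref{prop 2} holds with $\phi_1=\phi^{q/s}$ and $\phi_2=\psi$.

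We must also verify the membership requirements of Proposition \ref{prop 2}: $s\le l$, $\bar q\le\bar p$, $\phi^{q/s}\in\mathcal G_{s,l}$ and $\psi\in\mathcal G_{\bar q,\bar p}$. The first two are immediate from $q\le p$. For the weights, take $\theta\in(0,1]$. Then
\[
\phi(t)^{\theta}t^{d\theta/p}=\bigl(\phi(t)t^{d/p}\bigr)^{\theta},
\]
which is almost decreasing because positive powers preserve almost monotonicity, and similarly $\phi(t)^{\theta}t^{d\theta/q}$ is almost increasing. With $\theta=q/s$ we have $\theta/p=1/l$ and $\theta/q=1/s$, which gives $\phi^{q/s}\in\mathcal G_{s,l}$. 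With $\theta=q/\bar q$ we have $\theta/p=1/\bar p$ and $\theta/q=1/\bar q$, which gives $\psi\in\mathcal G_{\bar q,\bar p}$. Both values of $\theta$ lie in $(0,1)$ because $-\frac{d}{q}\le\beta<-\gamma<0$.

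Finally, Proposition \ref{prop 2} gives
\[
\Vert W\cdot I_\gamma f\Vert_{(L^q,L^p)^{\phi}}\lsim\Vert W\Vert_{(L^{s},L^{l})^{\phi^{q/s}}}\,\Vert I_\gamma f\Vert_{(L^{\bar q},L^{\bar p})^{\psi}},
\]
and the estimate from the first step finishes the proof.

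The main point needing care is the class-membership verification for the power weights, together with the hypothesis that Proposition \ref{prop 2} places on $\phi$ itself. That hypothesis holds here since $\phi\in\mathcal G_{q,p}$ is given. The remaining steps are exponent bookkeeping.
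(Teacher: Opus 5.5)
Your proposal is correct and follows the same route as the paper. You apply the H\"older-type inequality of Proposition~\ref{prop 2} to $W$ and $I_\gamma f$ with $\frac1s+\frac1{\bar q}=\frac1q$, $\frac1l+\frac1{\bar p}=\frac1p$ and $\phi^{q/s}\phi^{q/\bar q}=\phi$, then conclude with Theorem~\ref{theo 2}; your explicit check that $\phi^{q/s}\in\mathcal G_{s,l}$ and $\psi\in\mathcal G_{\bar q,\bar p}$ (powers $\theta\in(0,1)$ of almost monotone functions) supplies a hypothesis the paper leaves implicit, and your argument also covers $p=\infty$ directly instead of deferring it to the generalized Morrey case.
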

\begin{remark}
Note that Theorem \ref{theo 5} in the case $p=\infty$ was proved in \cite[Theorem 3.2]{GE}. 
\end{remark}
\begin{proof}[Proof of Theorem \ref{theo 5}]
 Let $1 < q\leq p < \infty$. Let $\phi\in \mathcal{G}_{q,p}$ satisfying condition (\ref{eqq 2}). Put $\frac{1}{\bar{p}} = \frac{1}{p} -\frac{1}{l} \: \text{ and } \: \frac{1}{\bar{q}} = \frac{1}{q} -\frac{1}{s}.$
This implies that $$ \frac{1}{\frac{l}{p}} +\frac{1}{\frac{\bar{p}}{p}} =  1 \: \text{ and } \: \frac{1}{\frac{s}{q}} +\frac{1}{\frac{\bar{q}}{q}} =  1.$$
According to Proposition \ref{prop 2}, 
 $$\left\Vert W \cdot I_{\gamma}f \right\Vert_{(L^{q},L^{p})^{\phi}(\mathbb{R}^{d})} \leq \left\Vert W  \right\Vert_{(L^{s},L^{l})^{\phi^{\frac{q}{s}}}(\mathbb{R}^{d})} \left\Vert I_{\gamma}f  \right\Vert_{(L^{\bar{q}},L^{\bar{p}})^{\phi^{\frac{q}{\bar{q}}}}(\mathbb{R}^{d})}.$$ 
We end the proof by applying Theorem \ref{theo 2}.
\end{proof}
The below estimate is also an Olsen-type inequality involving the generalized fractional integral operator $T_{\rho}$. 

\begin{theorem}\label{theo 6}
Suppose that $1 < q\leq p < \infty$. Let $\phi$ be a surjective function belonging to $ \mathcal{G}_{q,p}$ and satisfying condition (\ref{eqq 2}). Let  $\rho : \: (0,\, \infty) \:  \to \: (0, \,\infty) \:$ satisfying condition (\ref{eq 1}) and the inequalities \eqref{eq 8} and \eqref{eq 9}.  
Put
 $$\frac{q}{\bar{q}} = \frac{p}{\bar{p}}<1,\, \frac{1}{l} = \frac{1}{p} -\frac{1}{\bar{p}} \: \text{ and } \: \frac{1}{s} = \frac{1}{q} -\frac{1}{\bar{q}}. $$
 If $W \in (L^{s},L^{l})^{\phi^{\frac{q}{s}}}(\mathbb{R}^{d})$, then for all  $f \in (L^{q},L^{p})^{\phi}(\mathbb{R}^{d})$ we have
 $$\left\Vert W \cdot T_{\rho}f \right\Vert_{(L^{q},L^{p})^{\phi}(\mathbb{R}^{d})} \lsim \left\Vert W  \right\Vert_{(L^{s},L^{l})^{\phi^{\frac{q}{s}}}(\mathbb{R}^{d})} \left\Vert f  \right\Vert_{(L^{q},L^{p})^{\phi}(\mathbb{R}^{d})}.$$  
\end{theorem}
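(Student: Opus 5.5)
This follows the pattern of Theorem \ref{theo 5}: combine the Hölder-type inequality of Proposition \ref{prop 2} with the boundedness result for $T_{\rho}$ in Theorem \ref{theo 4}.

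\textbf{Exponents.} Write $\theta=\frac{q}{\bar q}=\frac{p}{\bar p}\in(0,1)$. From the definitions of $s$ and $l$,
$$\frac1s=\frac1q(1-\theta),\qquad \frac1l=\frac1p(1-\theta).$$
Hence $\frac{q}{s}+\frac{q}{\bar q}=1$ and $\frac{p}{l}+\frac{p}{\bar p}=1$. In particular $\frac1s+\frac1{\bar q}=\frac1q\le 1$ and $\frac1l+\frac1{\bar p}=\frac1p\le1$, so the exponent relation required in Proposition \ref{prop 2} holds with $(q_1,p_1)=(s,l)$, $(q_2,p_2)=(\bar q,\bar p)$, and target $(q,p)$. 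Since $q\le p$, we also get $s\le l$ and $\bar q\le\bar p$.

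\textbf{Weights.} Take $\phi_1=\phi^{q/s}$ and $\phi_2=\psi=\phi^{q/\bar q}$. Then $\phi_1\phi_2=\phi^{q/s+q/\bar q}=\phi$, so the condition $\phi_1\phi_2\le\phi$ holds with equality.

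We still need $\phi_1\in\mathcal G_{s,l}$ and $\phi_2\in\mathcal G_{\bar q,\bar p}$.
\begin{itemize}
\item For $\phi_1$: $\phi_1(t)t^{d/l}=(\phi(t)t^{d/p})^{q/s}$, because $\frac{d}{l}=\frac{q}{s}\cdot\frac{d}{p}$ (both equal $\frac{d}{p}(1-\theta)$). Almost decreasing is preserved under positive powers, so this is almost decreasing. Likewise $\phi_1(t)t^{d/s}=(\phi(t)t^{d/q})^{q/s}$ is almost increasing.
\item For $\phi_2$: the same computation with the exponent $\theta=q/\bar q$ in place of $q/s$ gives $\phi_2\in\mathcal G_{\bar q,\bar p}$.
\end{itemize}
This membership check is the only point that needs care. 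It works precisely because the ratios $q/\bar q$ and $p/\bar p$ are equal, which is the hypothesis $\frac{q}{\bar q}=\frac{p}{\bar p}$.

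\textbf{Conclusion.} Proposition \ref{prop 2} applied to $W$ and $T_\rho f$ gives
$$\|W\cdot T_\rho f\|_{(L^{q},L^{p})^{\phi}(\mathbb{R}^{d})}\lesssim \|W\|_{(L^{s},L^{l})^{\phi^{q/s}}(\mathbb{R}^{d})}\,\|T_\rho f\|_{(L^{\bar q},L^{\bar p})^{\psi}(\mathbb{R}^{d})}.$$
The hypotheses here are exactly those of Theorem \ref{theo 4}: $\phi$ surjective in $\mathcal G_{q,p}$ satisfying (\ref{eqq 2}), $\rho$ satisfying (\ref{eq 1}), (\ref{eq 8}) and (\ref{eq 9}), and $\frac{q}{\bar q}=\frac{p}{\bar p}<1$. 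Theorem \ref{theo 4} therefore gives $\|T_\rho f\|_{(L^{\bar q},L^{\bar p})^{\psi}(\mathbb{R}^{d})}\lesssim\|f\|_{(L^{q},L^{p})^{\phi}(\mathbb{R}^{d})}$. Substituting this into the previous inequality yields the claimed estimate.
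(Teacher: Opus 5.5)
Your proposal is correct and follows the paper's route exactly. You apply Proposition \ref{prop 2} with the pairs $(s,l,\phi^{q/s})$ and $(\bar q,\bar p,\phi^{q/\bar q})$, then Theorem \ref{theo 4} to $T_\rho f$, as in the proof of Theorem \ref{theo 5}; your check that $\phi^{q/s}\in\mathcal{G}_{s,l}$ and $\phi^{q/\bar q}\in\mathcal{G}_{\bar q,\bar p}$ (using $q/\bar q=p/\bar p$) supplies a hypothesis of Proposition \ref{prop 2} that the paper leaves implicit.
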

Theorem \ref{theo 6} is a consequence of Proposition \ref{prop 2} and Theorem \ref{theo 4}. Its proof is similar to that of Theorem \ref{theo 5}.


\begin{thebibliography}{99}

\bibitem{DFS} M. Dosso, I. Fofana, M. Sanogo, \textit{On some subspaces of Morrey-Sobolev spaces and boundedness
of Riesz integrals}, Ann. Pol. Math., 108 (2013), 133-153.

\bibitem{Er} H. G. Eridani, \textit{On the boundedness of a generalized fractional integral on generalized Morrey
spaces}, Tamkang J. Math., \textbf{33}(4), (2002), 335-340.


\bibitem{JF} J. Feuto, \textit{Norm inequalities in some subspaces of Morrey space}, Ann. Math. Blaise Pascal, \textbf{21} (2) (2014), 21-37.


\bibitem{IF1} I. Fofana, \textit{Etude d'une classe d'espaces de fonctions contenant les espaces de Lorentz}, Afr. Mat., \textbf{2} (1) (1988), 29-50.
\bibitem{fof} I. Fofana, \textit{Continuit\'{e} de l'int\'{e}grale fractionnaire et espace $(L^{q},\, l^{ p})^{\alpha}$}, C. R. Acad. Sci. Paris, \textbf{308} (1) (1989), 525-527.

\bibitem{Fof} I. Fofana, \textit{Espace $(L^{q},\, l^{ p})^{\alpha}$ et continuit\'e de l'opérateur maximal fractionnaire de Hardy–Littlewood}, Afr. Mat. \textbf{3} (12) (2001), 23-37.

\bibitem{Fof-F-K} I. Fofana, F.R. Fal\'ea, B.A. Kpata, \textit{A class of subspaces of Morrey spaces and norm inequalities on Riesz potential operators}, Afr. Mat., \textbf{26} (5-6) (2015), 717-739.

 \bibitem{FS2} J.J.F. Fournier, J. Stewart, \textit{Amalgams of $L^{p}$ and $l^{q}$}, Bull. Amer. Math. Soc., \text{13} (1985), 1-21.
 \bibitem{GF} J. Garci\'a-Cuerva et J. R. de Francia, \textit{Weighted norm inequalities and related topics}, vol. 116, North-Holland Math. Stud., 1985.

 \bibitem{GoM} A. Gogatishvili, R. Mustafayev, \textit{Equivalence of norms
  of Riesz potential and fractional maximal function in Morrey-type
  spaces}, Preprint, Institute of Mathematics, AS CR, Prague. (2008), 7-14.


\bibitem{GS} V.S. Guliyev and P.S. Shukurov, \textit{On the Boundedness of the Fractional Maximal Operator, Riesz Potential and Their Commutators in Generalized Morrey Spaces.} 
In: Almeida, A., Castro, L., Speck, FO. (eds) Advances in Harmonic Analysis and Operator Theory. Operator Theory: Advances and Applications, vol 229. Birkhäuser, Basel.

 \bibitem{GE0} H. Gunawan, \textit{A note on the generalized fractional integral operators}, J. Indones. Math. Soc. (MIHMI), \textbf{9}(2003), 39-43.


\bibitem{GE} H. Gunawan and Eridani, \textit{Fractional integrals and generalized Olsen inequalities}, Kyungpook Math. J.,
      \textbf{49}(2009), 31-39.

 \bibitem{FH} F. Holland, \textit{Harmonic analysis on amalgams of $L^{p}$ and $l^{q}$}, J. London. Math. Soc., \textbf{(2)} 10, (1975), 295-305.


\bibitem{KNS} K. Kurata, S. Nishigaki, and S. Sugano, \textit{Boundedness of integral operators on generalized
Morrey spaces and its application to Schr\"odinger operators}, Proc. Amer. Math.
Soc.,\textbf{ 128}(2002), 1125-1134.

\bibitem{Ma} N. Maz'ya, \textit{Sobolev spaces}, Springer-Verlag, Berlin and New York, 1985.

\bibitem{Mi} T. Mizuhara, \textit{Boundedness of some classical operators on generalized Morrey spaces}, In: Harmonic Analysis, ICM 90 Satellite Proc. (Ed. S. Igari), Springer-Verlag, Tokyo (1991), 183-189.

\bibitem{NKD} P. Nagacy, B.A. Kpata, and N. Diarra, \textit{Boundedness of the Hardy-Littlewood maximal operator on
generalized Fofana spaces}, Preprint, arXiv:2605.21670v1 [math.FA] (2026). 


\bibitem{EN}E. Nakai, \textit{Hardy-Littlewood maximal operator, singular integral operators and the Riesz potentials on generalized Morrey spaces}, Math. Nachr., 166(1994), 95-103. 

\bibitem{EN1}E. Nakai, \textit{On generalized fractional integrals}, Taiwanese J. Math., 5(2001), 587-602.
             
\bibitem{Ol} P. A. Olsen, \textit{Fractional integration, Morrey spaces and a Schr\"odinger equation}, Comm. Partial Differential Equations, 20(1995), 2005-2055.

\bibitem{S-S-T} Y. Sawano, S. Sugano and H. Tanaka, \textit{A note on generalized fractional integral operators on generalized Morrey spaces}, 	Bound. Value Probl., Vol. 2009, Article ID 835865, 18 pages.


\bibitem{SST} Y. Sawano, S. Sugano and H. Tanaka, \textit{Olsen's inequality and its applications to Schr\"odinger equations}, RIMS Kôkyûroku Bessatsu \textbf{B26} (2011), 51-80.

\bibitem{Su} S. Sugano, \textit{Some inequalities for generalized fractional integral operators on generalized Morrey spaces and their remarks}, Sci. Math. Jpn., \textbf{76}(3) (2013), 471–495.


\bibitem{ST} S. Sugano and H. Tanaka, \textit{Boundedness of fractional integral operators on generalized
Morrey spaces}, Sci. Math. Jpn., \textbf{58} (2003), 531-540.

\bibitem{W} N. Wiener, \textit{On the representation of functions by trigonometrical integrals},  Math. Z. \textbf{24} (1926), 575-616.
\end{thebibliography}
\end{document}